\documentclass[paper=a4,english,fontsize=11pt,parskip=half,abstract=true]{scrartcl}
\usepackage{babel}
\usepackage[utf8]{inputenc}
\usepackage[T1]{fontenc}
\usepackage{amsmath,amssymb,amsthm}
\usepackage{mathtools}
\usepackage{thmtools}
\usepackage{microtype}
\usepackage{embedfile}
\usepackage[left=20mm,right=20mm,top=30mm,bottom=30mm]{geometry}
\usepackage[shortlabels]{enumitem}
\usepackage[
  pdfauthor={Benjamin Sambale},
  pdftitle={Universally extendible characters},
  pdfstartview={FitH}
]{hyperref}

\renewcommand{\phi}{\varphi}
\newcommand{\C}{\mathrm{C}}
\newcommand{\N}{\mathrm{N}}
\newcommand{\Z}{\mathrm{Z}}
\newcommand{\pcore}{\mathrm{O}}

\newcommand{\F}{\mathrm{F}}

\newcommand{\CC}{\mathbb{C}}
\newcommand{\QQ}{\mathbb{Q}}

\newcommand{\FF}{\mathbb{F}}

\newcommand{\Aut}{\mathrm{Aut}}
\newcommand{\Inn}{\mathrm{Inn}}
\newcommand{\Out}{\mathrm{Out}}
\newcommand{\foc}{\mathrm{foc}}
\newcommand{\GL}{\mathrm{GL}}

\newcommand{\SL}{\mathrm{SL}}
\newcommand{\AGL}{\operatorname{AGL}}
\newcommand{\ASL}{\operatorname{ASL}}

\newcommand{\Irr}{\operatorname{Irr}}
\newcommand{\Char}{\operatorname{Char}}
\newcommand{\IBr}{\operatorname{IBr}}

\newcommand{\Syl}{\operatorname{Syl}}

\newcommand{\Ker}{\operatorname{Ker}}

\newcommand{\diag}{\operatorname{diag}}

\newcommand{\rsub}{\mathop{\trianglerighteq\trianglerighteq}}
\newcommand{\sub}{\mathop{\unlhd\unlhd}}
\newcommand{\MF}{\mathfrak F}

\newtheorem{Thm}{Theorem}[section]

\newtheorem{Lem}[Thm]{Lemma}
\newtheorem{Prop}[Thm]{Proposition}
\newtheorem{Cor}[Thm]{Corollary}

\theoremstyle{definition}
\newtheorem{Def}[Thm]{Definition}
\newtheorem{Ex}[Thm]{Example}
\numberwithin{equation}{section}

\title{Universally extendible characters}
\author{Benjamin Sambale\footnote{Institut für Algebra, Zahlentheorie und Diskrete Mathematik, Leibniz Universität Hannover, Welfengarten 1, 30167 Hannover, Germany,
\href{mailto:sambale@math.uni-hannover.de}{sambale@math.uni-hannover.de}}
}
\date{\today}

\begin{document}
\embedfile[desc={GAP code example 3.2},
           mimetype={text/plain}]{example3.2.g}
\def\sectionautorefname{Section}
\frenchspacing
\maketitle
\begin{abstract}\noindent
Let $\theta$ be an ordinary character or a Brauer character of a finite group $N$. 
We show that $\theta$ extends to every overgroup $G=G_\theta\trianglerighteq N$ if and only if $\Z(N)\le\Ker(\theta)$ and the deflated character $\overline\theta$ of $\Inn(N)$ extends to $\Aut(N)_\theta$. This equivalence is compatible with fields of values. A similar investigation for extensions from subnormal subgroups leads to a new characteristic subgroup of $N$.  
\end{abstract}

\section{Introduction}\label{sec:statement}

All groups in this paper are finite.
An important problem in representation theory is to decide when a character $\theta$ of a normal subgroup $N\unlhd G$ extends to $G$, i.\,e. when there exists a character $\chi$ of $G$ with $\chi_N=\theta$. Necessarily, $\theta$ must be $G$-invariant. 
For irreducible $\theta$ the most prominent sufficient criteria make assumptions on the embedding $N\subseteq G$ and hold for ordinary characters and for Brauer characters alike:
\begin{itemize}
\item $G/N$ is cyclic (see \cite[Theorem~(8.12)]{Navarro}),
\item $N$ is a Hall subgroup of $G$, i.\,e. $\gcd(|N|,|G:N|)=1$ (see \cite[Theorem~(8.13)]{Navarro}),
\item $\theta$ extends to all preimages of Sylow subgroups of $G/N$ (see \cite[Theorem~(8.29)]{Navarro}),
\item $\gcd(o(\theta)\theta(1),|G:N|)=1$, where $o(\theta)$
denotes the determinantal order of $\theta$ (see \cite[Theorem~(8.23)]{Navarro}).
\end{itemize}
The aim of this paper is to derive a criterion independent of $G$, i.\,e. we characterize when $\theta$ extends to \emph{every} possible overgroup. 

Let $\Char_p(N)$ be the set of Brauer characters of $N$ with respect to an algebraically closed field of characteristic $p>0$. The irreducible Brauer characters are denoted by $\IBr_p(N)$ as usual.
For convenience, we denote the set of (irreducible) ordinary complex characters by $\Char_0(N):=\Char(N)$ and $\IBr_0(N):=\Irr(N)$ respectively. In this context, every element is regarded as $p$-regular when $p=0$. 
The kernel $\Ker(\theta)$ of $\theta\in\Char_p(N)$ is the intersection of the kernels of the irreducible constituents of $\theta$. 

\begin{Def}\label{def:ue}
Let $\theta\in\Char_p(N)$ with $p\ge 0$. If $N$ is a normal subgroup of a group $G$, we denote the stabilizer of $\theta$ in $G$ by $G_\theta$. We call $G$ an \emph{overgroup} of $\theta$ if $\theta$ is $G$-invariant, i.\,e. $G_\theta=G$ (for irreducible $\theta$, $(G,N,\theta)$ is then a (modular) character triple in the notation of \cite{Navarro}). 
Moreover, $\theta$ is called \emph{universally extendible} if $\theta$ extends to every overgroup, i.\,e. for every such $G$ there exists a $\chi\in\Char_p(G)$ with $\chi_N=\theta$.
Finally, $N$ itself is called \emph{universally extendible} if every ordinary character of $N$ is universally extendible.
\end{Def}

Recall that the inner automorphism group $\Inn(N)$ of $N$ is naturally isomorphic to $N/\Z(N)$. Every character of $N$ is a class function and, as such, stabilized by $\Inn(N)$. If $\Z(N)\le\Ker(\theta)$, $\theta$ \emph{deflates} to a character $\overline{\theta}$ of $N/\Z(N)\cong\Inn(N)$. 
Our main theorem incorporates fields of values. 

\begin{Thm}\label{thm:main}
Let $F\subseteq\CC$ be a field and $\theta\in\Char_p(N)$ with $p\ge 0$. Then $\theta$ has an extension with values in $F$ to every overgroup if and only if $\Z(N)\le\Ker(\theta)$ and $\overline\theta$ has an extension to $\Aut(N)_\theta$ with values in $F$.
\end{Thm}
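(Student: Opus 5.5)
Both directions rest on two overgroups built from $N$ itself: the direct product $N\times A$ and the holomorph-type group $N\rtimes\Aut(N)_\theta$. In each, deflation to $N/\Z(N)\cong\Inn(N)$ is used to pass between $N$ and $\Inn(N)$.

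\emph{Necessity.} Suppose $\theta$ extends with values in $F$ to every overgroup.

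First we show $\Z(N)\le\Ker(\theta)$. Let $A$ be an abelian group and $\iota\colon\Z(N)\to A$ an injective homomorphism. Form the central product $G=(N\times A)/D$ with $D=\{(z,\iota(z)^{-1}):z\in\Z(N)\}$. Then $N$ embeds in $G$ and $G=N\,C_G(N)$, so every character of $N$ is $G$-invariant and $G$ is an overgroup.

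Let $\chi$ extend $\theta$. Restrict $\chi$ to the abelian group $\Z(N)A/D\cong A$, which is central in $G$. On $\Z(N)$ its irreducible constituents are the constituents $\lambda$ of $\theta_{\Z(N)}$, and these must extend to $A$ compatibly with the $N$-part. Choose $A$ cyclic of order $|\Z(N)|^2$ in the cyclic case, or more generally so that $\Z(N)\le\Phi(A)$. A degree comparison then shows that $\theta$ cannot extend unless $\theta_{\Z(N)}$ is trivial. Concretely, if $\theta$ extends, so does every irreducible constituent of $\theta$ lying over a faithful-on-part character of $\Z(N)$, and this contradicts a Clifford-theoretic count in $G/N\cong A/\Z(N)$.

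I expect this step to be the main obstacle. The right construction is more likely a wreath- or extraspecial-type overgroup in which $\Z(N)$ acquires a commutator: take $G=(N\times N\times\cdots)\rtimes C$, or $N\times E$ amalgamated, with $\Z(N)$ identified with part of $[E,E]$. Then any extension $\chi$ has $\chi_{\Z(N)}$ both equal to $\theta_{\Z(N)}$ and forced trivial by a determinant/commutator argument.

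Once $\Z(N)\le\Ker(\theta)$ is known, take $G=N\rtimes\Aut(N)_\theta$. Here $N$ is normal and $\theta$ is invariant: $N$ acts by inner automorphisms and $\Aut(N)_\theta$ fixes $\theta$. The subgroup $M=\{(n,\iota_n^{-1})\}\cong\Z(N)$-extended copy equals $C_G(N)\cap$, namely the kernel of the natural map $G\to\Aut(N)_\theta$, $(n,\alpha)\mapsto\iota_n\alpha$, and this kernel is $\{(n,\iota_{n}^{-1})\}$. Restricting an $F$-valued extension $\chi$ to $\Z(N)\times\Inn(N)$ shows that $\chi$ is trivial on that kernel modulo $\Z(N)$. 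Hence $\chi$ deflates to a character of $\Aut(N)_\theta$ extending $\overline\theta$ on $\Inn(N)$, with the same values.

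\emph{Sufficiency.} Suppose $\Z(N)\le\Ker(\theta)$ and $\psi$ extends $\overline\theta$ to $\Aut(N)_\theta$ with values in $F$. Let $G$ be any overgroup. The conjugation map $c\colon G\to\Aut(N)$ lands in $\Aut(N)_\theta$ because $\theta$ is $G$-invariant. Put $\chi=\psi\circ c\in\Char_p(G)$. For $n\in N$ we have $c(n)=\iota_n$, so $\chi(n)=\overline\theta(n\Z(N))=\theta(n)$. For $p>0$ this uses that inflation of Brauer characters is compatible, with $p$-regularity of images being fine. The values of $\chi$ lie in $F$, which completes this direction.
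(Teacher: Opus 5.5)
Your sufficiency argument is correct and matches the paper's. The necessity direction has two genuine gaps, and neither of your proposed test groups can close them.

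\textbf{The central product cannot force $\Z(N)\le\Ker(\theta)$.} The group $G=(N\times A)/D$ with $A$ abelian never obstructs extension. If $\theta\in\IBr_p(N)$ lies over the linear character $\lambda$ of $\Z(N)$, extend $\lambda\circ\iota^{-1}$ to a linear $\mu$ of $A$. Then $\theta\times\mu$ is trivial on $D$ and extends $\theta$; for reducible $\theta$, do this constituentwise. So no degree comparison or Clifford count in this group can force $\theta_{\Z(N)}$ to be trivial, at least for $F=\CC$. Your fallback of arranging $\Z(N)\le G'$ also fails for non-linear $\theta$. A determinant argument only gives $\lambda^{\theta(1)}=1$, and a non-linear extension need not be trivial on $G'$.

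\textbf{The holomorph step is false.} Nothing forces the kernel $K=\{n\iota_n^{-1}:n\in N\}\cong N$ of $N\rtimes\Aut(N)_\theta\to\Aut(N)_\theta$ into $\Ker(\chi)$. In fact the implication you need fails. For every $A$-invariant linear $\theta$, the map $n\alpha\mapsto\theta(n)$ is a linear character of $N\rtimes A$ extending $\theta$, and it is non-trivial on $K$ whenever $\theta\ne1$. Take the generalized dihedral group $N=C_3^2\rtimes C_2$ of \autoref{ex:dihedral}\eqref{ex:di3}. There $\Z(N)=1$ and $\theta$ extends to $N\rtimes\Aut(N)$ in this way. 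Yet $\overline\theta$ does not extend to $\Aut(N)\cong\AGL(2,3)$, because the inner automorphism induced by the involution lies in $\SL(2,3)\le\Aut(N)'$. So $N\rtimes\Aut(N)_\theta$ is too weak a test group.

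\textbf{The missing idea.} You need an overgroup in which the ``correction'' copy of $N$ is forced into the kernel of every extension. The paper takes $G=(N\wr A_k)\rtimes A$ and embeds $N$ twisted via $x\mapsto\delta(x^{-1})\kappa_x$. Then $N$ centralizes $W=N\wr A_k$, and $\kappa_x=\delta(x)x$. Choose $k\ge\max(5,m+1)$ with $\exp(N/N')\mid k$, where $m$ is the largest multiplicity in $\theta$. Then every constituent $\psi$ of $\chi_W$ has degree at most $m<k$. Since proper subgroups of $A_k$ have index at least $k$, Clifford theory gives $\psi_{N^k}=\psi(1)(\nu\times\cdots\times\nu)$ with $\nu$ linear and $\nu^k=1$. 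Hence $\delta(N)\le\Ker(\chi)$.

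This single group gives both conclusions at once:
\begin{enumerate}[(a)]
\item $\Z(N)\le\Ker(\theta)$, since $z=\delta(z^{-1})$ for $z\in\Z(N)$, together with $\pcore_p(N)\le\Ker(\theta)$ for the $p$-parts;
\item $\chi_A$ extends $\overline\theta$, with values in $F$.
\end{enumerate}
In your holomorph, $K$ plays the role of $\delta(N)$, but there is no large alternating group to kill it.
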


\begin{Cor}\label{cor:ordinary}
A character $\theta$ of $N$ is universally extendible if and only if $\Z(N)\le\Ker(\theta)$ and $\overline\theta$ has an extension to $\Aut(N)_\theta$.
\end{Cor}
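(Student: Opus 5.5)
The plan is to obtain the corollary as the special case $F=\CC$ of Theorem~\ref{thm:main}, applied to an ordinary character, i.\,e. with $p=0$. No new argument is needed; only a translation of the wording.

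\textbf{Step 1: the condition on values is vacuous.} Every ordinary (and every Brauer) character takes its values in $\CC$. Hence for any overgroup $G$ of $\theta$, the statement ``$\theta$ has an extension to $G$ with values in $\CC$'' says exactly that there is some $\chi\in\Char(G)$ with $\chi_N=\theta$. By Definition~\ref{def:ue}, requiring this for every overgroup $G$ is precisely the statement that $\theta$ is universally extendible. So the left-hand side of Theorem~\ref{thm:main} with $F=\CC$ is the left-hand side of the corollary.

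\textbf{Step 2: the same on the right-hand side.} Assume $\Z(N)\le\Ker(\theta)$. Then ``$\overline\theta$ has an extension to $\Aut(N)_\theta$ with values in $\CC$'' simply means that $\overline\theta$ extends to $\Aut(N)_\theta$. Here $\overline\theta$ is viewed as a character of $\Inn(N)\cong N/\Z(N)$, which is a normal subgroup of $\Aut(N)_\theta$, and every character takes values in $\CC$. So the right-hand sides agree as well, and Theorem~\ref{thm:main} yields the stated equivalence.

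The only thing to check is that the setting of the corollary fits the theorem: ``a character of $N$'' means an element of $\Char_0(N)=\Char(N)$, which is the case $p=0$. The same argument works verbatim for Brauer characters with $p>0$. There is no real obstacle; all the substance lies in the proof of Theorem~\ref{thm:main} itself.
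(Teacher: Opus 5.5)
Your proposal is correct and matches the paper's proof exactly: the corollary is the case $F=\CC$ of Theorem~\ref{thm:main}, where the field-of-values condition is vacuous on both sides. Your remark that the same specialization also covers Brauer characters ($p>0$) is right as well, since Brauer characters are complex-valued.
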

\begin{proof}
Take $F=\CC$ in \autoref{thm:main}.
\end{proof}

It is further possible to remove the $G$-invariance requirement on $\theta$. In fact, $\theta$ extends to every $G\trianglerighteq N$ if and only if $\Z(N)\le\Ker(\theta)$ and $\overline{\theta}$ extends to $\Aut(N)$. 

It may happen that $\theta$ is not universally extendible, but $2\theta$ is (\autoref{ex:dihedral}\eqref{ex:di3}). 
Recall that $\Char_p(N)$ can be identified with $\Char_0(N)$ whenever $p$ does not divide $|N|$ (see \cite[Theorem~(2.12)]{Navarro}). 
In this situation, $\theta$ can be universally extendible as a Brauer character, but not as an ordinary character when  $p\mid\lvert\Aut(N)\rvert$ (\autoref{ex:coprimecounter}).

For abelian groups $N$ only the multiples of the trivial character are universally extendible (cf. \autoref{cor:nilpotent}). On the opposite end we have:

\begin{Cor}\label{cor:grouplevel}
A group $N$ is universally extendible if and only if $\Z(N)=1$ and every $\chi\in\Irr(\Inn(N))$ extends to its stabilizer in $\Aut(N)$.
\end{Cor}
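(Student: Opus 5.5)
We derive \autoref{cor:grouplevel} from \autoref{cor:ordinary} applied to every ordinary character of $N$. The plan has two directions; the only real work is passing from irreducible characters of $\Inn(N)$ to arbitrary characters in the reverse implication.

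For the forward direction, assume $N$ is universally extendible. By \autoref{cor:ordinary}, every $\theta\in\Irr(N)$ satisfies $\Z(N)\le\Ker(\theta)$. Hence $\Z(N)\le\bigcap_{\theta\in\Irr(N)}\Ker(\theta)=1$, so $\Z(N)=1$. Then $N\cong\Inn(N)$ and deflation is just transport along this isomorphism, so every $\chi\in\Irr(\Inn(N))$ has the form $\overline\theta$ for some $\theta\in\Irr(N)$. Since $\chi$ and $\theta$ have the same stabilizer in $\Aut(N)$, \autoref{cor:ordinary} gives an extension of $\chi$ to $\Aut(N)_\chi$.

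For the converse, assume $\Z(N)=1$ and that every irreducible character of $\Inn(N)$ extends to its stabilizer in $\Aut(N)$. Let $\theta\in\Char(N)$ be arbitrary. The kernel condition in \autoref{cor:ordinary} is automatic, so it suffices to extend $\overline\theta$ to $A:=\Aut(N)_\theta$, where $\Inn(N)\unlhd A$.

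This is the one step needing care, since $\overline\theta$ need not be irreducible and its constituents need not be $A$-invariant. Write $\overline\theta=\sum_\chi a_\chi\chi$ with $\chi\in\Irr(\Inn(N))$. Because $\overline\theta$ is $A$-invariant, the multiplicities $a_\chi$ are constant on $A$-orbits. So $\overline\theta$ is a sum, with multiplicities, of orbit sums $\sum_{a\in A/A_\chi}\chi^a$.

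It therefore suffices to extend each orbit sum to $A$, and I will do this by inducing. By hypothesis $\chi$ extends to $\hat\chi\in\Char(\Aut(N)_\chi)$, and restricting gives an extension to $A_\chi=A\cap\Aut(N)_\chi$. Set $\psi:=\hat\chi^A$. Mackey's formula over the double cosets $\Inn(N)\backslash A/A_\chi$ applies, and since $\Inn(N)A_\chi=A_\chi$ each double coset equals a single coset $aA_\chi$. It gives
\[
\psi_{\Inn(N)}=\sum_{a\in A/A_\chi}\bigl(\hat\chi_{\Inn(N)}\bigr)^a=\sum_{a\in A/A_\chi}\chi^a,
\]
which is exactly the orbit sum. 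Summing these extensions with the multiplicities $a_\chi$ yields an extension of $\overline\theta$ to $A$, and \autoref{cor:ordinary} then shows that $\theta$ is universally extendible. Since $\theta$ was arbitrary, $N$ is universally extendible.
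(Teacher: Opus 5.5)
Your proof is correct and follows the paper's route. For the forward direction, you apply \autoref{cor:ordinary} and get $\Z(N)=1$ from $\bigcap_{\theta\in\Irr(N)}\Ker(\theta)=1$, where the paper instead picks $\theta$ lying over a non-trivial $\lambda\in\Irr(\Z(N))$; for the converse, your inline Mackey argument for $\Inn(N)\unlhd A$ (inducing extensions from stabilizers and summing orbit sums) is exactly the paper's \autoref{lem:orbit}.
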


\autoref{cor:grouplevel} implies that complete groups $N$, like symmetric groups of degree $\ge 7$, are universally extendible (here $\Z(N)=1$ and $\Out(N)=1$). This is generalized in \autoref{cor:criteria} below. 
In \autoref{sec:schur} we study when extensions are afforded by a representation over a given field. 
Subsequently, we investigate extensions from subnormal subgroups in \autoref{sec:subnormal}. This leads to a new characteristic subgroup of $N$, which we compute in special cases in \autoref{sec:RN}.

\section{Proofs of the main theorems}\label{sec:proofs}

In the following we fix $\theta\in\Char_p(N)$ with $p\ge 0$. 
Set $A:=\Aut(N)_\theta$ and $\Out(N)_\theta:=A/\Inn(N)$. Let $\kappa\colon N\to\Inn(N)$, $x\mapsto\kappa_x$ be the conjugation map with $\kappa_x(y)=xyx^{-1}$ for $x,y\in N$. 
For $k\ge1$ let $\delta\colon N\to N^k$, $x\mapsto(x,\ldots,x)$, be the diagonal map. For a group $G$, let $\exp(G)$ denote the exponent of $G$, i.\,e. the smallest integer $e\ge 1$ such that $g^e=1$ for all $g\in G$. The symmetric and alternating groups of degree $d$ are denoted by $S_d$ and $A_d$ respectively.

The following two elementary observations will be used several times.

\begin{Lem}\label{lem:diagcommutator}
Let $H\le S_k$ be a transitive permutation group where $k$ is divisible by $\exp(N/N')$. Then $\delta(N)\le(N\wr H)'$.
\end{Lem}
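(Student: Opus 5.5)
Throughout, $N\wr H=N^k\rtimes H$, with $H$ permuting the $k$ coordinates. The idea is to write each diagonal element $\delta(x)$ as a product of commutators in $N\wr H$. There are two sources of such commutators: commutators inside $N$, and commutators of elements of $N^k$ with elements of $H$.

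First, $N'$ is handled directly. For $x,y\in N$ we have $\delta([x,y])=[\delta(x),\delta(y)]\in (N^k)'\le (N\wr H)'$, so $\delta(N')\le(N\wr H)'$. It therefore suffices to show that $\delta(x)\in (N\wr H)'N^{k}{}'$ for every $x\in N$, i.e. to work modulo $\delta(N')$, or even modulo $(N')^k$. We pass to the image of $N\wr H$ modulo the normal subgroup $(N')^k$, which is $(N/N')\wr H$. Since $\delta(N')\le (N\wr H)'$ and $(N')^k=(N^k)'\le (N\wr H)'$, it suffices to prove the statement for the abelian group $\bar N:=N/N'$, whose exponent $e$ divides $k$.

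Now let $N$ be abelian, written additively, with $e\mid k$. For $h\in H$ and $v\in N^k$, the commutator $[v,h]$ equals $v-v^h$, where $h$ permutes the coordinates. Set $x_i$ to be the vector with $x$ in coordinate $i$ and $0$ elsewhere. Transitivity gives, for every $i,j$, an $h$ with $x_i^h=x_j$, so $x_i-x_j\in (N\wr H)'$. Summing over $j$,
\[
k\,x_i-\delta(x)=\sum_{j=1}^k (x_i-x_j)\in (N\wr H)'.
\]
Since $e\mid k$, we have $kx=0$, so $k\,x_i=0$ and hence $\delta(x)\in (N\wr H)'$, as required.

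The main point needing care is the reduction step. Commutator subgroups map onto commutator subgroups under surjections, so $(N\wr H)'$ maps onto $((N/N')\wr H)'$. Hence for $x\in N$ there is $c\in (N\wr H)'$ with $\delta(x)\equiv c$ modulo $(N')^k$. Since $(N')^k\le (N\wr H)'$, we conclude $\delta(x)\in (N\wr H)'$.
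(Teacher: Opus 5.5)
Your proof is correct and follows the paper's argument. Transitivity puts each difference $x_i-x_j$ in $(N\wr H)'$, summing over $j$ gives $k\,x_i-\delta(x)\in(N\wr H)'$, and $x^k\in N'$ finishes the proof. The only difference is your preliminary reduction modulo $(N')^k$ to the abelian group $N/N'$. The paper skips it by observing that, for fixed $x$, the elements $e_1(x),\ldots,e_k(x)$ already commute pairwise in $N^k$, so the same computation runs directly in $N\wr H$ and ends with $e_1(x)^k=e_1(x^k)\in e_1(N')\le(N\wr H)'$.
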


\begin{proof}
For $x\in N$ and $1\le i\le k$ let $e_i(x)\in N^k$ be the element with $x$ in position $i$ and $1$ elsewhere, so that $\delta(x)=e_1(x)\cdots e_k(x)$. Set $W:=N\wr H$. For every $i$ there is some $h\in H$ with $he_1(x)h^{-1}=e_i(x)$ and therefore $e_1(x)^{-1}e_i(x)=e_1(x)^{-1}he_1(x)h^{-1}\in W'$. Since $e_1(x),\ldots,e_k(x)$ commute pairwise, multiplying over $i=1,\ldots,k$ yields 
\[e_1(x)^{-k}\delta(x)=\prod_{i=1}^ke_1(x)^{-1}e_i(x)\in W'.\]
Finally $x^k\in N'$ by the choice of $k$, so that $e_1(x)^k=e_1(x^k)\in e_1(N')\le W'$ and $\delta(x)\in W'$.
\end{proof}

\begin{Lem}\label{lem:wreath}
Let $W:=N\wr A_k$ where $k\ge5$. Let $\psi\in\IBr_p(W)$ with $\psi(1)<k$. Then $\psi_{N^k}=\psi(1)(\nu\times\ldots\times\nu)$ for some linear $\nu\in\IBr_p(N)$. In particular, $\psi(x,1,\ldots,1)=\psi(1)\nu(x)$ for every $p$-regular $x\in N$. If $k$ is divisible by $\exp(N/N')$, then $\psi(\delta(x))=\psi(1)$.
\end{Lem}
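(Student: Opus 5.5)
We plan to apply Clifford theory (for Brauer characters) to the normal subgroup $N^k\unlhd W$. Let $\phi\in\IBr_p(N^k)$ be an irreducible constituent of $\psi_{N^k}$, say $\phi=\phi_1\times\cdots\times\phi_k$ with $\phi_i\in\IBr_p(N)$. By Clifford's theorem, $\psi_{N^k}=e\sum_{j=1}^t\phi^{w_j}$, where $t=|W:W_\phi|$ is the length of the orbit of $\phi$ and $e\ge1$. Since $N^k$ acts trivially on its own characters, the orbit of $\phi$ under $W$ is its orbit under $A_k$, which permutes the factors. Hence $t$ equals the number of distinct tuples obtained by permuting $(\phi_1,\ldots,\phi_k)$ with even permutations.

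The key step is to show that all $\phi_i$ are equal. Suppose not, and let $m$ be the largest multiplicity of a character among $\phi_1,\ldots,\phi_k$, so $m<k$. The orbit of the tuple under $S_k$ has size at least $\binom{k}{m}\ge k$ when $1\le m\le k-1$. Passing from $S_k$ to $A_k$ changes the stabilizer by index at most $2$, so the $A_k$-orbit has size at least $\binom{k}{m}/2$. This crude bound is not quite enough, so we argue directly. If $m\le k-2$, then $\binom{k}{m}\ge\binom{k}{2}$, and $\binom{k}{2}/2\ge k$ for $k\ge5$. If $m=k-1$, the tuple has one odd entry out. Its $S_k$-stabilizer is $S_{k-1}$, which contains odd permutations because $k-1\ge2$. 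Therefore the $A_k$-orbit has size exactly $k$. In either case $\psi(1)\ge t\cdot\phi(1)\ge k$, which contradicts $\psi(1)<k$.

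So $\phi=\mu\times\cdots\times\mu$ is $A_k$-invariant, and $\psi_{N^k}=e\phi$. We now show that $\mu$ is linear. Here $\psi(1)=e\mu(1)^k<k$. If $\mu(1)\ge2$, then $\mu(1)^k\ge2^k>k$, which is impossible. So $\mu=:\nu$ is linear, $e=\psi(1)$, and $\psi_{N^k}=\psi(1)(\nu\times\cdots\times\nu)$. Evaluating at the $p$-regular element $(x,1,\ldots,1)$ gives $\psi(1)\nu(x)$.

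For the final claim, we consider two cases according to whether $\delta(x)$ is $p$-regular. Brauer characters are only defined on $p$-regular elements, so $\psi(\delta(x))$ should be read for $p$-regular $x$, where $\delta(x)$ is $p$-regular. In that case $\psi(\delta(x))=\psi(1)\nu(x)^k$. Since $\nu$ is linear, $N'\le\Ker(\nu)$, and since $\exp(N/N')$ divides $k$, we get $x^k\in N'$. Hence $\nu(x)^k=\nu(x^k)=1$. Alternatively, one could use \autoref{lem:diagcommutator}: $A_k$ is transitive, so $\delta(x)\in W'$, and every linear constituent is trivial on $W'$.

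The main obstacle is the orbit-counting step, specifically making sure that the $A_k$-orbit, and not just the $S_k$-orbit, has length at least $k$. The case analysis on $m$ above handles this, and it is where the hypothesis $k\ge5$ is used: it gives $\binom{k}{2}\ge2k$.
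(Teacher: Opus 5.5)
Your strategy is the same as the paper's. You apply Clifford theory to $N^k\unlhd W$, use $|A_k:(A_k)_\phi|\,\phi(1)\le\psi(1)<k$ to force $A_k$-invariance of a constituent $\phi$, use transitivity to make all factors equal, and use $\mu(1)^k<k$ to get linearity. The difference is how you bound the orbit length, and there your case analysis has a hole.

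The assertion that $m\le k-2$ implies $\binom{k}{m}\ge\binom{k}{2}$ is false for $m=1$. In that case your bound only gives an orbit of length at least $\binom{k}{1}/2=k/2<k$. So the case in which $\phi_1,\ldots,\phi_k$ are pairwise distinct is not covered as written.

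The repair is immediate. For $m=1$ the stabilizer in $S_k$ is trivial, so the $A_k$-orbit has length $k!/2\ge k$. In general the $S_k$-orbit has length $k!/\prod_i m_i!$, which can be much larger than $\binom{k}{m}$. With this case added, your count is correct for $2\le m\le k-2$ (using $k\ge5$) and for $m=k-1$ (where $S_{k-1}$ contains a transposition).

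For comparison, the paper avoids any case distinction. Since $A_k$ is simple and $|A_k|>|S_{k-1}|$, a proper subgroup of index $n<k$ would give an injective homomorphism $A_k\to S_n$, which is impossible. Hence every proper subgroup of $A_k$ has index at least $k$, and $|A_k:(A_k)_\phi|<k$ immediately forces $(A_k)_\phi=A_k$.

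Separately, drop your ``alternative'' argument via \autoref{lem:diagcommutator}; it is not correct as stated. First, $\psi$ need not be linear: for $k=5$ and $p=0$ it could be inflated from a degree-$3$ character of $A_5$. Second, the linear constituents of $\psi_{N^k}$ are characters of $N^k$, not of $W$. So there is no reason for them to be trivial on $W'\cap N^k$ unless you first show that $\nu\times\cdots\times\nu$ extends to a linear character of $W$. Your primary argument, $\nu(x)^k=\nu(x^k)=1$ because $x^k\in N'\le\Ker(\nu)$, is fine and is exactly what the paper does.
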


\begin{proof}
Since ordinary characters are Brauer characters with respect to some $p\nmid|N|$, we may assume that $p>0$. 
By \cite[Theorem~(8.21)]{Navarro} the irreducible Brauer characters of $N^k$ are the products $\nu_1\times\ldots\times\nu_k$ with $\nu_i\in\IBr_p(N)$. Let $\mu:=\nu_1\times\ldots\times\nu_k$ be a constituent of $\psi_{N^k}$. By Clifford's theorem \cite[Corollary~(8.7)]{Navarro}, $\psi_{N^k}$ is a multiple of the sum of the $A_k$-orbit of $\mu$, and therefore
\[|A_k:(A_k)_\mu|\mu(1)\le\psi(1)<k.\]
Every proper subgroup of $A_k$ has index at least $k$, since $A_k$ is simple and $|A_k|>|S_{k-1}|$. Consequently, $\mu$ is $A_k$-invariant. Since $A_k$ acts transitively on the $k$ factors, we have $\nu_1=\ldots=\nu_k=:\nu$. From $\nu(1)^k=\mu(1)<k\le2^k$ we deduce that $\nu$ is linear, so that $\mu(1)=1$ and $\psi_{N^k}=\psi(1)\mu$. For $p$-regular $x\in N$ this gives $\psi(x,1,\ldots,1)=\psi(1)\nu(x)$ and $\psi(\delta(x))=\psi(1)\nu(x)^k$. Finally, $\nu$ is trivial on $N'$, whence $\nu^k=1_N$ if $k$ is a multiple of $\exp(N/N')$.
\end{proof}

\begin{proof}[Proof of \autoref{thm:main}]
Suppose first that $\Z(N)\le\Ker(\theta)$ and $\overline{\theta}$ extends to $\chi\in\Char_p(A)$ with values in $F$. Let $\Delta$ be a representation of $A$ affording $\chi$. Let $G$ be an overgroup of $\theta$, and let $\gamma\colon G\to A$ be the conjugation map. Then $\Delta\circ\gamma$ is a representation of $G$ affording $\chi\circ\gamma\in\Char_p(G)$, which satisfies $\chi(\gamma(x))=\chi(\kappa_x)=\overline{\theta}(\kappa_x)=\theta(x)$ for $p$-regular $x\in N$. Hence, $\theta$ extends to $G$ with values in $F$. 

Assume conversely that $\theta$ has an extension with values in $F$ to every overgroup. Let $\theta=\sum_{i=1}^rm_i\theta_i$ with distinct $\theta_1,\ldots,\theta_r\in\IBr_p(N)$ and $m_i\ge1$, and set $m:=\max\{m_1,\ldots,m_r\}$. Choose an integer $k\ge\max(5,m+1)$ divisible by $\exp(N/N')$, and let $T:=A_k$ act naturally on $\Omega:=\{1,\ldots,k\}$. Let $W:=N\wr T=N^k\rtimes T$ be the corresponding wreath product. It is easy to check that $A$ acts on the left of $W$ via
\[^{\alpha}(x_1,\ldots,x_k,t):=(\alpha(x_1),\ldots,\alpha(x_k),t).\]
Set
\[G:=W\rtimes A\cong N^k\rtimes(T\times A)\]
and identify $N^k$, $A$, $T$ and $W$ with their canonical copies inside $G$. Let $\mu\colon N\to G$, $x\mapsto \delta(x^{-1})\kappa_x$. For $x,y\in N$ we compute
\[\mu(x)\mu(y)=\delta(x^{-1})\kappa_x\delta(y^{-1})\kappa_y=\delta(x^{-1})\delta(\kappa_x(y)^{-1})\kappa_x\kappa_y=\delta((xy)^{-1})\kappa_{xy}=\mu(xy),\]
i.\,e. $\mu$ is a homomorphism, and clearly injective. Moreover, $\alpha(\mu(x))=\mu(\alpha(x))$ for $\alpha\in A$. Hence, $A$ acts on $\mu(N)$ in the same way as it acts on $N$. Since $\delta(N)$ is centralized by $T$ and $T$ commutes with $A$, the subgroup $\mu(N)$ is centralized by $T$. For $x_1,\ldots,x_k,y\in N$ we compute
\[\mu(y)(x_1,\ldots,x_k)=(y^{-1}\kappa_y(x_1),\ldots,y^{-1}\kappa_y(x_k),\kappa_y)=(x_1,\ldots,x_k)\mu(y).\]
Hence, $\mu(N)$ is centralized by $W$. In particular, $\mu(N)\unlhd G$. Since $\mu$ commutes with the action of $A$, we may replace $G$ by an isomorphic copy in which $\mu$ becomes the inclusion map of $N$. Then $G$ is an overgroup of $\theta$, and $x=\delta(x^{-1})\kappa_x$, i.\,e. $\kappa_x=\delta(x)x$, for every $x\in N$.

By hypothesis, $\theta$ extends to some $\chi\in\Char_p(G)$ with values in $F$. Since $[N,W]=1$, the map $\pi\colon N\times W\to NW$, $(u,w)\mapsto uw$ is a surjective homomorphism, so that $\widetilde\chi:=\chi_{NW}\circ\pi$ lies in $\Char_p(N\times W)$. Thus, we can write
\[\widetilde\chi=\sum_{i=1}^r\sum_{\psi\in\IBr_p(W)}a_{i\psi}(\theta_i\times\psi)\]
with integers $a_{i\psi}\ge0$, where only the constituents $\theta_i$ of $\theta$ occur since $\widetilde\chi_{N\times1}=\theta$ and $(\theta_i\times\psi)_{N\times1}=\psi(1)\theta_i$. Comparing the restrictions to $N\times1$ and to $1\times W$ yields
\[m_i=\sum_\psi a_{i\psi}\psi(1),\qquad\chi_W=\sum_{i=1}^r\theta_i(1)\sum_\psi a_{i\psi}\psi.\]
Hence, every irreducible constituent $\psi$ of $\chi_W$ satisfies $\psi(1)\le m_i\le m<k$ for some $i$.
\autoref{lem:wreath} gives $\psi(\delta(x))=\psi(1)$ for $p$-regular $x\in N$. Summing over the irreducible constituents of $\chi_W$ yields $\chi(\delta(x))=\chi(1)$, i.\,e. $\delta(x)\in\Ker(\chi)$.

For $p$-regular $z\in\Z(N)$ we have $\kappa_z=1$ and therefore $z=\delta(z^{-1})\in\Ker(\chi)$. This shows $\theta(z)=\chi(z)=\chi(1)=\theta(1)$, i.\,e. $z\in\Ker(\theta)$. Since $\Z(N)$ is abelian, its $p$-elements (when $p>0$) form a normal $p$-subgroup of $N$ and therefore lie in $\pcore_p(N)\le\Ker(\theta)$ (see \cite[Lemma~(2.32)]{Navarro}). Hence, $\Z(N)\le\Ker(\theta)$.
More generally, $\delta(x)\in\Ker(\chi)$ implies $\chi(\kappa_x)=\chi(\delta(x)x)=\chi(x)=\theta(x)$ for $p$-regular $x\in N$. Thus, $\chi_A$ extends $\overline{\theta}$ on $\Inn(N)$ and takes only values of $\chi$. 
\end{proof}

\begin{Lem}\label{lem:orbit}
Let $G$ be an overgroup of $\theta\in\Char(N)$. If every irreducible constituent of $\theta$ extends to its stabilizer in $G$, then $\theta$ extends to $G$.
\end{Lem}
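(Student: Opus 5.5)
Since $\theta$ is $G$-invariant, $G$ permutes its irreducible constituents, and conjugate constituents occur with the same multiplicity: if $\theta_i^g=\theta_j$ then $m_i=m_j$, because $\theta^g=\theta$. So I would first write
\[\theta=\sum_{\mathcal O}m_{\mathcal O}\sum_{\phi\in\mathcal O}\phi,\]
where $\mathcal O$ runs over the $G$-orbits on the irreducible constituents and $m_{\mathcal O}$ is the common multiplicity on $\mathcal O$. Sums of extensions are extensions. It therefore suffices to show that, for one orbit $\mathcal O$ with representative $\phi$, the orbit sum $\sum_{\phi'\in\mathcal O}\phi'$ extends to $G$.

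Fix such $\phi$, let $T:=G_\phi\ge N$, and let $\widehat\phi$ be an extension of $\phi$ to $T$, which exists by hypothesis. I take $\chi:=\widehat\phi^{\,G}$, the induced character. The claim is that $\chi_N$ equals the orbit sum. By Mackey's formula, using $N\unlhd G$ and $N\le T$,
\[(\widehat\phi^{\,G})_N=\sum_{g\in G/T}\bigl((\widehat\phi)^g\bigr)_N=\sum_{g\in G/T}\phi^g .\]
The map $gT\mapsto\phi^g$ is a bijection from $G/T$ onto $\mathcal O$, since $T$ is exactly the stabilizer. Hence $\chi_N=\sum_{\phi'\in\mathcal O}\phi'$.

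If one prefers to avoid Mackey, the same identity follows from the induction formula. Since $N\unlhd G$, we have $gxg^{-1}\in N$ for $x\in N$, so
\[\chi(x)=\frac1{|T|}\sum_{g\in G}\widehat\phi(g^{-1}xg)=\frac1{|T|}\sum_{g\in G}\phi^{g}(x),\]
and each $\phi'\in\mathcal O$ occurs exactly $|T|$ times in the last sum. Finally I would set $\sum_{\mathcal O}m_{\mathcal O}\chi_{\mathcal O}$, which is a character of $G$ extending $\theta$.

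There is no real obstacle. The only points needing care are that multiplicities are constant on $G$-orbits, which is exactly where $G$-invariance of $\theta$ is used, and that conjugates $(\widehat\phi)^g$ restrict to $\phi^g$ on $N$. The statement is for ordinary characters, so plain induction suffices. The same argument would also work for Brauer characters, using the induction formula on $p$-regular elements.
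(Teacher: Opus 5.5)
Your proof is correct and is essentially the paper's argument. You pick orbit representatives, extend each to its stabilizer, induce to $G$, weight by the multiplicity, and use Mackey (or the induction formula) to show the restriction to $N$ is the orbit sum. Your explicit remark that multiplicities are constant on $G$-orbits is where $G$-invariance of $\theta$ enters; the paper uses this only implicitly in its final equality.
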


\begin{proof}
Since $\theta$ is $G$-invariant, its irreducible constituents form $G$-orbits. Let $\theta_1,\ldots,\theta_k\in\Irr(N)$ be representatives of those orbits. Let $m_i:=[\theta,\theta_i]$ be the multiplicity of $\theta_i$ in $\theta$. By hypothesis, each $\theta_i$ extends to some $\hat\theta_i\in\Irr(G_{\theta_i})$. Now $\chi:=\sum_{i=1}^km_i\hat\theta_i^G$ is an extension of $\theta$ because
\[\chi_N=\sum_{i=1}^km_i(\hat\theta_i^G)_N=\sum_{i=1}^km_i\sum_{G_{\theta_i}x\in G_{\theta_i}\backslash G}\theta_i^x=\sum_{\psi\in\Irr(N)}[\theta,\psi]\psi=\theta.\qedhere\]
\end{proof}

\begin{proof}[Proof of \autoref{cor:grouplevel}]
If $\Z(N)\ne1$, choose $\lambda\in\Irr(\Z(N))$ non-trivial and
$\theta\in\Irr(N)$ lying over $\lambda$. Then $\Z(N)\nsubseteq\Ker(\theta)$ and $\theta$ is not universally extendible by \autoref{cor:ordinary}. Now let $\Z(N)=1$, so that $N=\Inn(N)$ and $\overline\theta=\theta$. By \autoref{cor:ordinary}, an irreducible character of $N$ is universally extendible if and only if it extends to its stabilizer in $\Aut(N)$. Hence, the claim follows from \autoref{lem:orbit}.
\end{proof}

We record one more property of the overgroup $G$ built above.

\begin{Lem}\label{lem:NcapH}
There exists an overgroup $G$ satisfying $N\cap G'=\kappa^{-1}(A')$.
\end{Lem}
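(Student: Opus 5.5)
We take the overgroup $G:=W\rtimes A$ from the proof of \autoref{thm:main}, with $W=N\wr A_k$, $k\ge5$ divisible by $\exp(N/N')$, and $N$ embedded via $\mu$, so that $\kappa_x=\delta(x)x$ for $x\in N$. We prove the two inclusions $N\cap G'\subseteq\kappa^{-1}(A')$ and $\kappa^{-1}(A')\subseteq N\cap G'$ separately.

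\emph{The inclusion $N\cap G'\subseteq\kappa^{-1}(A')$.} Consider the projection $\rho\colon G=W\rtimes A\to A$. Its kernel is $W$, and it restricts to the identity on $A$. For $x\in N$ we have $\rho(x)=\rho(\delta(x)^{-1}\kappa_x)=\kappa_x$, because $\delta(x)\in N^k\le W$. If $x\in N\cap G'$, then $\kappa_x=\rho(x)\in\rho(G')\le A'$, so $x\in\kappa^{-1}(A')$.

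\emph{The inclusion $\kappa^{-1}(A')\subseteq N\cap G'$.} Let $x\in N$ with $\kappa_x\in A'$. Since $A\le G$, we have $A'\le G'$, and hence $\kappa_x\in G'$. By \autoref{lem:diagcommutator} applied to the transitive group $A_k$, we get $\delta(x)\in W'\le G'$. Therefore
\[x=\delta(x)^{-1}\kappa_x\in G'.\]
Since $x\in N$ by assumption, it follows that $x\in N\cap G'$.

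Neither step is a real obstacle. The one point to check is the bookkeeping of the identification: $N$ sits inside $G$ as $\mu(N)=\{\delta(x^{-1})\kappa_x : x\in N\}$, and the map $\kappa$ in the statement must be read through this identification. With that reading, $\rho|_N$ is exactly $\kappa$. We also use that $G$ is an overgroup of $\theta$, which was already established in the proof of \autoref{thm:main}.
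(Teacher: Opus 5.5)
Your proof is correct and is essentially the paper's argument. Both use $x=\delta(x)^{-1}\kappa_x$, then \autoref{lem:diagcommutator} to place $\delta(x)$ in $W'\le G'$, and then $G/W\cong A$ (your projection $\rho$, the paper's $G'\le WA'$) to see that $\kappa_x\in G'$ forces $\kappa_x\in A'$; the only difference is that you split the argument into two inclusions, whereas the paper writes it as the chain $x\in G'\iff\kappa_x\in G'\iff\kappa_x\in A'$.
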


\begin{proof}
Let $G$ be the overgroup constructed in the proof of \autoref{thm:main} (observe that the construction does not require universal extendibility).
Recall from the proof that $x=\delta(x^{-1})\kappa_x$ for $x\in N$. Since $W=N^k\rtimes T$ with $T$ transitive on $\Omega$ and $k$ is divisible by $\exp(N/N')$, \autoref{lem:diagcommutator} gives $\delta(x)\in W'\le G'$ for every $x\in N$. Hence, $x\in G'$ if and only if $\kappa_x\in G'$. Since $G/W\cong A$, we have $\kappa_x\in G'\le WA'$ if and only if $\kappa_x\in A'$. 
\end{proof}

\section{Consequences}\label{sec:consequences}

\begin{Prop}\label{prop:coprime}
Let $p$ be a prime not dividing $|N|$ and let $\theta\in\Irr(N)$. Then $\theta$ is universally extendible as an ordinary character if and only if $\theta$ is universally extendible as a $p$-Brauer character.
\end{Prop}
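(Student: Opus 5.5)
By \autoref{cor:ordinary} and \autoref{thm:main} (with $F=\CC$), both statements share the condition $\Z(N)\le\Ker(\theta)$. This condition has the same meaning in both settings, because for $p\nmid|N|$ the identification of $\Char_p(N)$ with $\Char_0(N)$ preserves kernels: every element of $N$ is $p$-regular. So I assume $\Z(N)\le\Ker(\theta)$ throughout. Set $A:=\Aut(N)_\theta$. Then $\overline\theta\in\Irr(\Inn(N))$, where $\Inn(N)\unlhd A$ is a $p'$-group, and it remains to show the following. The character $\overline\theta$ extends to an ordinary character of $A$ if and only if it extends to a $p$-Brauer character of $A$.

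\emph{Ordinary implies Brauer.} Suppose $\chi\in\Char(A)$ extends $\overline\theta$. Its restriction $\chi^\circ$ to the $p$-regular elements of $A$ lies in $\Char_p(A)$. Since every element of $\Inn(N)$ is $p$-regular, $\chi^\circ$ agrees with $\overline\theta$ on all of $\Inn(N)$. So $\chi^\circ$ is a Brauer extension.

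\emph{Brauer implies ordinary.} Let $\chi\in\Char_p(A)$ extend $\overline\theta$. Since $\overline\theta$ is irreducible, $\chi$ is irreducible. I will apply the Sylow criterion \cite[Theorem~(8.29)]{Navarro} to ordinary characters: it suffices to show that $\overline\theta$ extends (as an ordinary character) to $P$ for every prime $q$, where $P/\Inn(N)$ is a Sylow $q$-subgroup of $A/\Inn(N)$. I split into two cases.
\begin{itemize}
\item If $q\ne p$, then $|P|=|\Inn(N)|\cdot|P:\Inn(N)|$ is not divisible by $p$, so $P$ is a $p'$-group. Hence $\IBr_p(P)=\Irr(P)$ under the standard identification \cite[Theorem~(2.12)]{Navarro}. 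The restriction $\chi_P$ is a Brauer character of $P$ extending $\overline\theta$, and it is irreducible because its restriction to $\Inn(N)$ is. So $\chi_P$ is an ordinary irreducible character of $P$ extending $\overline\theta$.
\item If $q=p$, then $\Inn(N)$ is a normal Hall subgroup of $P$, since $|\Inn(N)|$ is prime to $p$ and $P/\Inn(N)$ is a $p$-group. Moreover $\overline\theta$ is $P$-invariant, because $P\le A$. Hence $\overline\theta$ extends to $P$ by \cite[Theorem~(8.13)]{Navarro}, without using the hypothesis at all.
\end{itemize}
Combining both cases, \cite[Theorem~(8.29)]{Navarro} gives an ordinary extension of $\overline\theta$ to $A$, and \autoref{cor:ordinary} finishes the argument.

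The only real obstacle is the Brauer-to-ordinary direction, because a Brauer character of $A$ need not lift to an ordinary character when $p$ divides $|A|$. Reducing to Sylow preimages via \cite[Theorem~(8.29)]{Navarro} avoids any lifting: each such preimage is either a $p'$-group, where Brauer and ordinary characters coincide, or has $\Inn(N)$ as a normal Hall subgroup, where extendibility is automatic.
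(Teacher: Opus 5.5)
Your proof is correct and follows the paper's argument essentially step for step. You reduce via \autoref{thm:main} to extending $\overline\theta$ to $A$, pass from ordinary to Brauer by restricting to $p$-regular elements, and for the converse treat Sylow preimages separately ($p'$-groups when $q\ne p$, a coprime extension criterion when $q=p$) before gluing with the Sylow criterion. The differences are cosmetic: you cite Navarro's Hall criterion (8.13) and Sylow criterion (8.29) where the paper cites Isaacs' Corollaries 8.16 and 11.31, and you carry out the ordinary-to-Brauer step at the level of $A$ rather than at the level of the overgroups.
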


\begin{proof}
By \autoref{thm:main} we may assume that $\Z(N)\le\Ker(\theta)$, since otherwise $\theta$ is universally extendible in neither sense. If $\theta$ has an ordinary extension to every overgroup, then the restrictions of these extensions to the $p$-regular classes are $p$-Brauer extensions by \cite[Corollary~(2.9)]{Navarro}. 

Conversely, let $\overline\theta$ extend to some $\chi\in\Char_p(A)$. Let $q$ be a prime and let $P$ be the preimage in $A$ of a Sylow $q$-subgroup of $\Out(N)_\theta$. If $q\ne p$, then $|P|=|\Inn(N)||P:\Inn(N)|$ is not divisible by $p$. Hence, $\Char_p(P)=\Char(P)$, and $\chi_P$ is an ordinary extension of $\overline\theta$. If $q=p$, then $p$ divides neither $\overline\theta(1)$ nor the determinantal order $o(\overline\theta)$, because both divide $|\Inn(N)|$. Since $|P:\Inn(N)|$ is a power of $p$, the character $\overline\theta$ extends to $P$ by \cite[Corollary~8.16]{Isaacs}. Now $\overline\theta$ extends to $A$ by \cite[Corollary~11.31]{Isaacs} and the claim follows from \autoref{cor:ordinary}.
\end{proof}

\begin{Ex}\label{ex:coprimecounter}
\autoref{prop:coprime} fails for reducible characters: there exists a group $N$ of order $2\cdot 3^{10}$ with a unique non-trivial linear character $\theta$ such that $2\theta$ is universally extendible as a $5$-Brauer character, but not as an ordinary character (see the GAP~\cite{GAP4161} code attached to this pdf).
\end{Ex}

We present some useful sufficient criteria for universal extendibility.

\begin{Prop}\label{cor:criteria}\hfill
\begin{enumerate}[(a)]
\item\label{cor:brauercrit} Let $\theta\in\IBr_p(N)$ with $\Z(N)\le\Ker(\theta)$. If the Schur multiplier $H^2(\Out(N)_\theta,\CC^\times)$ is a $p$-group (trivial if $p=0$), then $\theta$ is universally extendible.
 
\item If $\Z(N)=1$ and every Sylow subgroup of $\Out(N)$ is cyclic, then $N$ is universally extendible.
\end{enumerate}
\end{Prop}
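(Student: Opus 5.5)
The plan is to reduce both parts to \autoref{thm:main} (for (a)) and to \autoref{cor:grouplevel} (for (b)). In each case we only need to extend the deflated character $\overline\theta$ from $\Inn(N)$ to its stabilizer $A=\Aut(N)_\theta$. For this we use standard projective-representation theory of character triples.

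For (a), since $\Z(N)\le\Ker(\theta)$, the character $\overline\theta\in\IBr_p(\Inn(N))$ is defined and $A$-invariant, so $(A,\Inn(N),\overline\theta)$ is a (modular) character triple. By \cite[Theorem~(8.14)]{Navarro} (or its ordinary analogue when $p=0$), $\overline\theta$ determines a cohomology class in $H^2(A/\Inn(N),\CC^\times)$, and $\overline\theta$ extends if and only if this class is trivial. In the modular case one should work with the correct coefficient group: the obstruction lies in $H^2$ with coefficients in $F^\times$, where $F$ is the algebraically closed field of characteristic $p$. Its torsion is $p'$, so it is isomorphic to the $p'$-part of $H^2(\Out(N)_\theta,\CC^\times)$. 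If the Schur multiplier is a $p$-group, this $p'$-part is trivial and the obstruction vanishes.

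The cleanest way to make this rigorous, and probably the one intended, uses \cite[Theorem~(8.29)]{Navarro}: it suffices to extend $\overline\theta$ to the preimages $P_q$ of Sylow $q$-subgroups of $\Out(N)_\theta$.
\begin{itemize}
\item For $q=p$ there is nothing to show, since $\overline\theta$ always extends when $P_p/\Inn(N)$ is a $p$-group (Green's theorem for Brauer characters, \cite[Theorem~(8.11)]{Navarro}).
\item For $q\ne p$, the Sylow $q$-subgroup $Q$ of $\Out(N)_\theta$ has $H^2(Q,\CC^\times)$ embedding into the $q$-part of $H^2(\Out(N)_\theta,\CC^\times)$ via restriction and corestriction. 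That $q$-part is trivial by hypothesis, so the obstruction class for $(P_q,\Inn(N),\overline\theta)$ is trivial and $\overline\theta$ extends to $P_q$.
\end{itemize}
Hence $\overline\theta$ extends to $A$, and \autoref{thm:main} with $F=\CC$ finishes the proof. The main obstacle is this bookkeeping: the obstruction class is well defined in the modular setting, and only the $p'$-part of the Schur multiplier matters.

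For (b), let $\theta\in\Irr(N)$ and $\Z(N)=1$. Then every Sylow subgroup of $\Out(N)_\theta\le\Out(N)$ is cyclic, since subgroups of cyclic groups are cyclic. A group with all Sylow subgroups cyclic has trivial Schur multiplier, because the $q$-part of $H^2(G,\CC^\times)$ embeds in $H^2(Q,\CC^\times)=1$ for cyclic $Q$. Alternatively, extend to each Sylow preimage using \cite[Theorem~(8.12)]{Navarro} (cyclic quotient), then apply \cite[Theorem~(8.29)]{Navarro}. Either way, part (a) with $p=0$ (or the direct Sylow argument) shows that each irreducible character of $N=\Inn(N)$ extends to its stabilizer in $\Aut(N)$. \autoref{cor:grouplevel} then gives that $N$ is universally extendible.
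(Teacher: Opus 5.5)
Your first paragraph for (a) is essentially the paper's proof, and it is complete on its own. The obstruction class of $(A,\Inn(N),\overline\theta)$ lies in $H^2(\Out(N)_\theta,E^\times)$ with $E$ algebraically closed of characteristic $p$ (Navarro's Theorem~(8.15)). That group is isomorphic to the $p'$-part of $H^2(\Out(N)_\theta,\CC^\times)$ (Karpilovsky, Proposition~2.1.14), so it is trivial. For $p=0$ one uses Isaacs' Theorem~(11.7), and then \autoref{thm:main} applies.

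The trouble is the Sylow route you call ``the cleanest way to make this rigorous''. For $q\ne p$ you claim that $H^2(Q,\CC^\times)$ embeds into the $q$-part of $H^2(\Out(N)_\theta,\CC^\times)$. Restriction--corestriction gives the opposite injection, $H^2(B,\CC^\times)_q\hookrightarrow H^2(Q,\CC^\times)$, which is the direction you use correctly in (b). The claimed direction is false: $B=C_3\times S_3$ has trivial Schur multiplier, while its Sylow $3$-subgroup $C_3\times C_3$ has multiplier $C_3$. So the hypothesis on $\Out(N)_\theta$ does not make $H^2(Q,\CC^\times)$ trivial, and triviality of the obstruction for $(P_q,\Inn(N),\overline\theta)$ does not follow as you argue it.

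The step can be repaired as follows. The obstruction class of $(P_q,\Inn(N),\overline\theta)$ is the restriction to $Q$ of the obstruction class $\omega$ of $(A,\Inn(N),\overline\theta)$. The $q$-primary component of $\omega$ vanishes by hypothesis. The $q'$-components restrict to elements of $q'$-order in the $q$-group $H^2(Q,E^\times)$, hence to $1$. However, this repair needs the global modular obstruction class. Once you have it, your first paragraph already shows it is trivial, so the Sylow reduction buys nothing; your use of Green's theorem for $q=p$ is correct but superfluous.

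Part (b) is fine as written. Your alternative is to extend to each Sylow preimage via the cyclic-quotient criterion (Navarro's Theorem~(8.12)) and glue with Theorem~(8.29). This is slightly more elementary than the paper's route, which shows $H^2(\Out(N)_\theta,\CC^\times)=1$ by Isaacs' Corollary~(11.21) and then invokes (a). It is exactly what the paper's parenthetical reference to Isaacs' Corollaries~(11.22) and (11.31) points to.
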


\begin{proof}\hfill
\begin{enumerate}[(a)]
\item Let $B:=\Out(N)_\theta$. Suppose first that $p>0$. 
Let $E$ be an algebraically closed field of characteristic $p$. By \cite[Theorem~(8.15)]{Navarro} the triple
$(A,\Inn(N),\overline\theta)$ has an obstruction class in $H^2(B,E^\times)$ which
is trivial if and only if $\overline\theta$ extends to $A$. By
\cite[Proposition~2.1.14]{Karpilovsky}, $H^2(B,E^\times)$ is isomorphic to the
$p'$-part of the Schur multiplier $H^2(B,\CC^\times)$ and therefore trivial.
Hence, $\overline\theta$ extends to $A$. If $p=0$ and $H^2(B,\CC^\times)=1$, then $\overline\theta$ extends to $A$ by \cite[Theorem~11.7]{Isaacs}. In both cases \autoref{thm:main} applies.

\item Let $\theta\in\Irr(N)$. Every Sylow subgroup of $\Out(N)_\theta$ is contained in a Sylow
subgroup of $\Out(N)$ and is therefore cyclic. This implies that $H^2(\Out(N)_\theta,\CC^\times)=1$ by \cite[Corollary~11.21]{Isaacs}, so $\theta$ is universally extendible by \ref{cor:brauercrit}. Now the claim follows from \autoref{cor:grouplevel} (cf. \cite[Corollaries~11.22 and 11.31]{Isaacs}).\qedhere
\end{enumerate}
\end{proof}

By \cite[Problem~(2.7)]{Navarro}, the linear Brauer characters of $N$ can be identified with $\Irr\bigl(N/N'\pcore^{p'}(N)\bigr)$, i.\,e. with the ordinary linear characters of $p'$-order (when $p>0$). 

\begin{Prop}\label{cor:linearcrit}
For every linear character $\theta\in\IBr_p(N)$ the following are equivalent:
\begin{enumerate}[(1)]
\item\label{item:lin1} $\theta$ is universally extendible.
\item\label{item:lin3} $N\cap G'\le\Ker(\theta)$ for every overgroup $G$ of $\theta$.
\item\label{item:lin2} $\kappa^{-1}(A')\le\Ker(\theta)$.
\end{enumerate}
\end{Prop}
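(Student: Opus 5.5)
We prove the cycle \ref{item:lin1}$\Rightarrow$\ref{item:lin3}$\Rightarrow$\ref{item:lin2}$\Rightarrow$\ref{item:lin1}. Throughout we use that a linear Brauer character is an actual homomorphism on $p$-regular elements, namely an ordinary linear character of $p'$-order of $N/N'\pcore^{p'}(N)$.

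For \ref{item:lin1}$\Rightarrow$\ref{item:lin3}, let $G$ be an overgroup and $\chi\in\Char_p(G)$ an extension of $\theta$. Since $\chi(1)=\theta(1)=1$, $\chi$ is a linear Brauer character of $G$, so $G'\le\Ker(\chi)$. Let $x\in N\cap G'$. If $x$ is $p$-regular, then $\theta(x)=\chi(x)=1$. Otherwise we argue through the kernel: $\Ker(\chi)\cap N\le\Ker(\theta)$, because $\Ker(\chi)\cap N$ is a normal subgroup of $N$ on which $\theta$ is trivial on all $p$-regular elements. Such a subgroup lies in $\Ker(\theta)$ by the definition of the kernel of a Brauer character, or, if the paper's convention requires it, by the description $\Ker(\theta)\supseteq N'\pcore^{p'}(N)$ together with the identification of linear Brauer characters. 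This convention issue is the only delicate point in this step, and I expect it to be harmless.

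For \ref{item:lin3}$\Rightarrow$\ref{item:lin2}, apply \autoref{lem:NcapH}. It provides an overgroup $G$ with $N\cap G'=\kappa^{-1}(A')$, so \ref{item:lin3} yields $\kappa^{-1}(A')\le\Ker(\theta)$ directly.

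For \ref{item:lin2}$\Rightarrow$\ref{item:lin1}, we first get $\Z(N)\le\kappa^{-1}(1)\le\kappa^{-1}(A')\le\Ker(\theta)$, so $\overline\theta$ is defined. It is a linear character of $\Inn(N)$ that is trivial on $\Inn(N)\cap A'$. Hence $\overline\theta$ factors through $\Inn(N)/(\Inn(N)\cap A')\cong\Inn(N)A'/A'$, which is a subgroup of the abelian group $A/A'$. We extend the corresponding linear character of this subgroup to a linear character of $A/A'$, which is possible for abelian groups. We can choose this extension to have $p'$-order in the Brauer case: the $p$-part of the extended character is trivial on the subgroup, so we may discard it, because the subgroup character already has $p'$-order. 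Inflating to $A$ gives a linear (Brauer) character extending $\overline\theta$. \autoref{cor:ordinary}, or \autoref{thm:main} with $F=\CC$, then shows that $\theta$ is universally extendible.

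The main obstacle is the last step, specifically ensuring that the extension is a genuine Brauer character when $p>0$. The $p'$-order trick just described handles this.
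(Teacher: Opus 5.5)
Your proposal is correct and follows the paper's proof essentially step for step. Both prove the cycle (1)$\Rightarrow$(2)$\Rightarrow$(3)$\Rightarrow$(1) with the same ingredients: a degree-one extension kills $G'$, \autoref{lem:NcapH} gives (3), and for (3)$\Rightarrow$(1) you deflate $\overline\theta$ through $\Inn(N)A'/A'$, extend to $A/A'$, inflate, and apply \autoref{cor:ordinary}.

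The two points you flag as delicate are harmless, and the paper treats them implicitly. An extension of a linear Brauer character comes from a one-dimensional representation of $G$. Its kernel contains $G'$ and meets $N$ in $\Ker(\theta)$. Also, any linear character of $A$ restricted to $p$-regular elements is already a Brauer character, so your $p'$-order adjustment is valid but not needed.
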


\begin{proof}\hfill
\begin{enumerate}[leftmargin=20pt,align=left,labelwidth=10pt]
\item[\ref{item:lin1}$\Rightarrow$\ref{item:lin3}:] An extension $\hat\theta$ of $\theta$ to an overgroup $G$ has degree
$1$. Hence, $G'\le\Ker(\hat\theta)$ and $N\cap G'\le\Ker(\theta)$. 

\item[\ref{item:lin3}$\Rightarrow$\ref{item:lin2}:] Apply \ref{item:lin3} to the overgroup $G$ of
\autoref{lem:NcapH}.

\item[\ref{item:lin2}$\Rightarrow$\ref{item:lin1}:] By hypothesis, $\Z(N)=\Ker(\kappa)\le\kappa^{-1}(A')\le\Ker(\theta)$ and $\Inn(N)\cap A'\le\Ker(\overline{\theta})$. By deflation, we may regard $\overline{\theta}$ as a linear character of $\Inn(N)/(\Inn(N)\cap A')\cong \Inn(N)A'/A'$. As such, it extends to the abelian group $A/A'$. By inflation, $\overline{\theta}$ extends to $A$. Hence, \ref{item:lin1} follows from \autoref{cor:ordinary}.\qedhere
\end{enumerate}
\end{proof}

We conclude with some examples.

\begin{Ex}\label{ex:dihedral}\hfill
\begin{enumerate}[(i)]
\item Let
\[N=\langle a,b\mid a^{12}=1,\ b^2=a^6,\ a^b=a^{-1}\rangle=\mathtt{SmallGroup}(24,4)\]
be the dicyclic group of order $24$. Then $\Z(N)=\langle a^6\rangle$ has order $2$, and there is a unique $\theta\in\Irr(N)$ of degree $2$ with $\Ker(\theta)=\Z(N)$, so that $\overline\theta\in\Irr(\Inn(N))$ is faithful. Since $\Z(N)$ is characteristic in $N$, we have $A=\Aut(N)\cong D_8\times S_3$, but $\overline\theta$ does not extend to $A$. Consequently, $\theta$ is not universally extendible. Let $\theta^0$ be the restriction of $\theta$ to the $2$-regular elements. Since $\IBr_2(N)=\{1_N,\theta^0\}$, we have $\Out(N)_{\theta^0}=\Out(N)\cong C_2^2$, whose Schur multiplier $C_2$ has trivial $2'$-part. Hence, $\theta^0$ is universally extendible by \autoref{cor:criteria}\ref{cor:brauercrit}.

\item Let $\theta$ be the unique non-trivial linear character of $N\cong D_{10}$. Since $A=\Aut(N)\cong C_5\rtimes C_4$, we have $\kappa^{-1}(A')=N'\le\Ker(\theta)$, so $\theta$ is universally extendible. Note that $\theta$ is rational, but neither of the two extensions to $A$ is rational (their field of values is $\QQ(i)$). 

\item\label{ex:di3} Let $\theta$ be the unique non-trivial linear character of the generalized dihedral group $N=C_3^2\rtimes C_2$. Here
\[A=\Aut(N)\cong\AGL(2,3)\cong C_3^2\rtimes\GL(2,3)\]
is the affine linear group and $A'=\ASL(2,3)$. The inverting automorphism is the central involution of $\SL(2,3)$, so $\Inn(N)\le A'$ and therefore $\kappa^{-1}(A')=N\nsubseteq\Ker(\theta)$. By \autoref{cor:linearcrit}, $\theta$ is not universally extendible. 
On the other hand, the inflation to $A$ of any faithful $\chi\in\Irr(\GL(2,3))$ of degree $2$ is an extension of $2\theta$. Hence, $2\theta$ is universally extendible by \autoref{cor:ordinary}. Finally, let $\lambda\in\Irr(N)$ be of degree $2$. Then $\Out(N)_{\theta+\lambda}=\Out(N)_\lambda\cong S_3$ has trivial Schur multiplier. By \cite[Theorem~11.7]{Isaacs}, $\overline\theta$ and $\overline\lambda$ both extend to $\Aut(N)_{\theta+\lambda}$. Therefore, $\theta+\lambda$ is universally extendible by \autoref{cor:ordinary}.
\end{enumerate}
\end{Ex}

\section{Realizable characters over given fields}\label{sec:schur}

In this section we investigate when an extension $\chi\in\Char(G)$ of an ordinary character $\theta$ of $N$ is \emph{realizable} over a field $F\subseteq\CC$, i.\,e. there exists an $F$-representation of $G$ affording $\chi$. 

\begin{Thm}\label{thm:schurmain}
Let $\theta\in\Char(N)$ and let $F\subseteq\CC$ be a field. Then $\theta$ has an extension realizable over $F$ to every overgroup if and only if $\Z(N)\le\Ker(\theta)$ and $\overline\theta$ has an extension to $\Aut(N)_\theta$ realizable over $F$.
\end{Thm}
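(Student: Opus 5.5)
The proof follows the template of the proof of \autoref{thm:main}, with ``values in $F$'' replaced by ``realizable over $F$''. We work throughout with ordinary characters ($p=0$).

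\emph{Sufficiency.} Suppose $\Z(N)\le\Ker(\theta)$ and $\overline\theta$ extends to $\chi\in\Char(A)$, afforded by an $F$-representation $\Delta$ of $A$. Let $G$ be an overgroup of $\theta$ and let $\gamma\colon G\to A$ be the conjugation map. Then $\Delta\circ\gamma$ is again an $F$-representation. Its character $\chi\circ\gamma$ restricts to $\theta$ on $N$ by exactly the computation in the proof of \autoref{thm:main}. So this direction is immediate.

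\emph{Necessity.} Assume $\theta$ has an extension realizable over $F$ to every overgroup. Since realizable over $F$ implies values in $F$, \autoref{thm:main} already gives $\Z(N)\le\Ker(\theta)$. For the extension of $\overline\theta$, take the specific overgroup
\[G=W\rtimes A\cong N^k\rtimes(T\times A)\]
from the proof of \autoref{thm:main}, with $k\ge\max(5,m+1)$ divisible by $\exp(N/N')$, and with $N$ identified with $\mu(N)$ so that $\kappa_x=\delta(x)x$. Let $\chi$ be an extension of $\theta$ to $G$ afforded by an $F$-representation $\mathcal X$. The argument there, via \autoref{lem:wreath}, shows $\delta(N)\le\Ker(\chi)$, and hence $\chi(\kappa_x)=\theta(x)$.

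Now restrict $\mathcal X$ to the subgroup $A\le G$. The restriction $\mathcal X_A$ is still an $F$-representation, and it affords $\chi_A$. On $\Inn(N)=\kappa(N)\le A$, the character $\chi_A$ agrees with $\overline\theta$. Therefore $\chi_A$ is an extension of $\overline\theta$ to $A=\Aut(N)_\theta$ realizable over $F$.

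The only point needing care is that $\Ker(\chi)$ in the character sense really contains $\delta(N)$ for the representation $\mathcal X$. This holds because $\chi(g)=\chi(1)$ forces $\mathcal X(g)=1$ for an ordinary representation, since eigenvalues are roots of unity. In any case, we only need the character identity on $\Inn(N)$, and restriction of representations to a subgroup preserves realizability. So I expect no real obstacle: the main work was already done in the proof of \autoref{thm:main}, and the new step is simply that $A$ is a genuine subgroup of the constructed overgroup.
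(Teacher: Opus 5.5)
Your proposal is correct and follows the paper's argument exactly. The paper's one-line proof observes that both directions of the proof of \autoref{thm:main} already produce representations: $\Delta\circ\gamma$ for sufficiency, and for necessity the restriction to the subgroup $A$ of an $F$-representation of the constructed overgroup $G$. You spell both out, and you also make explicit the one detail the paper leaves implicit, namely that $\chi(\delta(x))=\chi(1)$ puts $\delta(x)$ in the kernel of the representation, so that $\chi(\kappa_x)=\chi(x)$.
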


\begin{proof}
Both implications in the proof of \autoref{thm:main} produce the desired representations.
\end{proof}

Recall that an irreducible character $\psi$ is realizable over $F$ if and only if $F(\psi)=F$ and the Schur index $m_F(\psi)$ equals $1$ (see \cite[Definition~10.1 and Corollary~10.2(d)]{Isaacs}). 

\begin{Lem}\label{lem:schurext}
Let $H\le G$ and let $\psi\in\Irr(H)$ be realizable over a field $F\subseteq\CC$. Then an extension $\chi$ of $\psi$ to $G$ is realizable over $F$ if and only if $F(\chi)=F$.
\end{Lem}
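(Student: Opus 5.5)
The forward implication is immediate: if $\chi$ is afforded by an $F$-representation, then all its values are traces of matrices with entries in $F$, so $F(\chi)=F$. For the converse, assume $F(\chi)=F$. Since $\chi_H=\psi$ is irreducible, $\chi$ is irreducible. By the criterion recalled before the lemma, it remains to show that the Schur index satisfies $m_F(\chi)=1$.

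The plan is to use the standard divisibility property of Schur indices under restriction (\cite[Lemma~10.4]{Isaacs}). Take $E:=F(\chi)=F$. Since $\chi\in\Irr(G)$ and $\psi\in\Irr(H)$ with $[\chi_H,\psi]=1$, the lemma yields
\[m_F(\psi)\;\Big|\;m_F(\chi)\,[\chi_H,\psi]\,\bigl|F(\chi,\psi):F(\psi)\bigr|\qquad\text{and}\qquad m_F(\chi)\;\Big|\;m_F(\psi)\,[\psi^G,\chi]\,\bigl|F(\chi,\psi):F(\chi)\bigr|.\]
Only the second relation is needed. Frobenius reciprocity gives $[\psi^G,\chi]=[\psi,\chi_H]=1$, and $F(\chi,\psi)=F$ because both $F(\chi)$ and $F(\psi)$ equal $F$. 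The hypothesis that $\psi$ is realizable over $F$ gives $m_F(\psi)=1$. Hence $m_F(\chi)$ divides $1$, and $\chi$ is realizable over $F$.

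The main obstacle is citing the correct form of the restriction/induction bound for Schur indices, which involves the multiplicity and a field-degree factor. If the exact statement in \cite{Isaacs} differs, an alternative route works directly. Let $V$ be an $F$-representation affording $\psi$. The induced module $V^G$ is an $F$-representation affording $\psi^G$, and $\chi$ occurs in $\psi^G$ with multiplicity $1$. Since $F(\chi)=F$, the Galois conjugates of $\chi$ over $F$ are just $\chi$ itself. So the irreducible $FG$-constituent of $V^G$ containing $\chi$ affords $m_F(\chi)\chi$ (\cite[Theorem~9.21]{Isaacs}). This character lies below $\psi^G$, which forces $m_F(\chi)\le[\psi^G,\chi]=1$.
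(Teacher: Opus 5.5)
Your proof is correct and is essentially the paper's argument: $\chi$ is irreducible because $\chi_H=\psi$, Frobenius reciprocity gives $[\psi^G,\chi]=1$, and \cite[Lemma~10.4]{Isaacs} then yields $m_F(\chi)\mid m_F(\psi)=1$. Your fallback argument via the induced $FG$-module $V^G$ is also valid, since it directly reproves the special case of that lemma you need: $m_F(\chi)$ bounds the multiplicity of $\chi$ in any character afforded by an $F$-representation.
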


\begin{proof}
One direction is trivial. So let $F(\chi)=F$. Since $\chi_H=\psi$ is irreducible, so is $\chi$. Moreover, $F(\psi)\subseteq F(\chi)=F$ and $[\psi^G,\chi]=[\psi,\chi_H]=1$ by Frobenius reciprocity. Now \cite[Lemma~10.4]{Isaacs} gives $m_F(\chi)\mid m_F(\psi)=1$. Hence $\chi$ is realizable over $F$.
\end{proof}

By \autoref{lem:schurext}, the conditions in \autoref{thm:main} and \autoref{thm:schurmain} are equivalent for every $\theta\in\Irr(N)$ realizable over $F$.

\begin{Ex}\label{ex:dic}\hfill
\begin{enumerate}[(i)]
\item Let $N\cong C_{15}\rtimes C_4=\mathtt{SmallGroup}(60,7)$ with $\Z(N)=1$ and $N'\cong C_{15}$. Let $\theta\in\Irr(N)$ be the unique character of degree $2$ with $\Ker(\theta)\cong C_5$, that is, the inflation of the faithful character of the dicyclic group $N/C_5$ of order $12$. Then $\theta$ has rational values and Frobenius--Schur indicator $-1$. Thus, $m_\QQ(\theta)=2$ and $\theta$ is not realizable over $\QQ$. One can show that $\theta$ has an extension to $\Aut(N)$ with values in $\QQ(i)$ and Schur index $1$. Hence, $\theta$ has an extension realizable over $\QQ(i)$ to every overgroup by \autoref{thm:schurmain}.

\item Let $V:=\FF_{81}$, regarded as a vector space of dimension $4$ over $\FF_3$. Let $a\in\GL(V)$ be the multiplication by a generator of $\FF_{81}^\times$. Let $b\in\GL(V)$ be the Frobenius map $x\mapsto x^3$. We set 
\[H_0:=\langle a^4,a^2b^2\rangle,\qquad H:=\langle a^2,b^2\rangle,\qquad N:=V\rtimes H,\qquad K:=V\rtimes H_0.\]
Then $H_0\cong C_5\rtimes C_8$, $H\cong D_{10}\times C_8$ and $|N|=6480$. Moreover, $\Z(N)=1$. Let $\theta\in\Irr(N)$ be the unique linear character with $\Ker(\theta)=K$. It takes the values $\pm1$ and is realizable over $\QQ$. Here $A=\Aut(N)_\theta$ has order $25920$ with $A/K\cong Q_8$ and $\Z(A/K)=N/K$. In particular, $2\overline{\theta}$ has a unique (rational) extension $\chi\in\Char(A)$. By \autoref{thm:main}, $2\theta$ has rational extensions to every overgroup.
However, by \autoref{thm:schurmain}, there exists an overgroup to which $2\theta$ has no extension realizable over $\QQ$, since $\chi$ is not realizable over $\QQ$. Therefore, the remark after \autoref{lem:schurext} fails for reducible characters. 
\end{enumerate}
\end{Ex}

\section{Subnormal subgroups}\label{sec:subnormal}

Throughout this section $\theta\in\Char_p(N)$ with $p\ge 0$. Recall that $N$ is \emph{subnormal} in a group $G$ if there exists a chain $N=N_1\unlhd\ldots\unlhd N_k=G$. In this case we write $N\sub G$. Since normal subgroups are subnormal, one may ask which characters extend to every group $G$ containing $N$ as a subnormal subgroup. For non-normal $N$ the natural analog of $G$-invariance is invariance under $G$-\emph{fusion}, i.\,e. $\theta(x^g)=\theta(x)$ whenever $x,x^g\in N$ are $p$-regular and $g\in G$. We call $\theta$ \emph{subnormally extendible} if $\theta$ extends to every group $G\rsub N$ in which $\theta$ is fusion invariant. If $N\unlhd G$, then $G$-fusion invariance is just $G$-invariance. Clearly, every subnormally extendible character is universally extendible. We will see in \autoref{thm:submain} that the converse is false.

\begin{Thm}\label{thm:sublinear}
Let $\theta$ be subnormally extendible and let $N\sub G$ such that $\theta$ is $G$-fusion invariant. Then $\theta$ is the restriction of a sum of linear characters of $G$. In particular, every irreducible constituent of $\theta$ is linear and extends to $G$. Moreover, $N\cap G'\le\Ker(\theta)$.
\end{Thm}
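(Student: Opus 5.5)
The plan is to embed $G$ into a wreath product, so that \autoref{lem:wreath} forces every extension to be built from linear characters, as in the proof of \autoref{thm:main}. Since $\theta$ is fixed, choose an integer $k\ge\max(5,\theta(1)+1)$. Let $W:=G\wr A_k=G^k\rtimes A_k$, and identify $N$ with $N\times1\times\cdots\times1\le G\times1\times\cdots\times1$. From $N\sub G$ we get $N\times1\times\cdots\times1\sub G\times1\times\cdots\times1\unlhd G^k\unlhd W$, so $N$ is subnormal in $W$.

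First I would check that $\theta$ is $W$-fusion invariant. Let $w=(g_1,\ldots,g_k)t$ with $t\in A_k$, and let $x\in N$ be $p$-regular with $(x,1,\ldots,1)^w\in N\times1\times\cdots\times1$. Conjugation by $t$ moves the nontrivial entry from position $1$ to position $t(1)$ or its inverse, depending on convention. When $x\ne1$, the condition forces that position to be $1$. The conjugate is then $(x^{g},1,\ldots,1)$ for a suitable single $g\in\{g_1,\ldots,g_k\}$, and $G$-fusion invariance gives $\theta(x^g)=\theta(x)$. This step is the only real bookkeeping, and I expect it to be routine once the conventions for the wreath product are fixed.

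Since $\theta$ is subnormally extendible, it extends to some $\chi\in\Char_p(W)$. Every irreducible constituent $\psi$ of $\chi$ satisfies $\psi(1)\le\chi(1)=\theta(1)<k$. \autoref{lem:wreath}, applied with $G$ in place of $N$, gives a linear $\nu_\psi\in\IBr_p(G)$ such that $\psi(g,1,\ldots,1)=\psi(1)\nu_\psi(g)$ for $p$-regular $g\in G$. Write $\chi=\sum_\psi a_\psi\psi$ with integers $a_\psi\ge0$. For $p$-regular $x\in N$ this yields
\[\theta(x)=\chi(x,1,\ldots,1)=\sum_\psi a_\psi\psi(1)\nu_\psi(x).\]
Hence $\theta$ is the restriction to $N$ of the sum $\sum_\psi a_\psi\psi(1)\nu_\psi$ of linear characters of $G$.

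The remaining claims follow directly. Each $(\nu_\psi)_N$ is linear and irreducible, so by uniqueness of the decomposition into irreducible Brauer characters, the irreducible constituents of $\theta$ are exactly these restrictions. In particular they are linear and extend to $G$. Each $\nu_\psi$ has degree $1$, so $G'\le\Ker(\nu_\psi)$ for every $\psi$. Therefore $N\cap G'$ lies in the kernel of every constituent, that is, $N\cap G'\le\Ker(\theta)$. The main point needing care is the degree bound feeding \autoref{lem:wreath}; taking $k>\theta(1)$ handles this.
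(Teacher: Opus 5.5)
Your proposal is correct and follows essentially the same route as the paper: embed $G$ into $G\wr A_k$ with $k\ge\max(5,\theta(1)+1)$, check that fusion of $N$ in the wreath product reduces to $G$-fusion, and apply \autoref{lem:wreath} (with $G$ in place of $N$) to the constituents of an extension $\chi$, obtaining linear $\nu_\psi\in\IBr_p(G)$ with $\theta=\sum_\psi a_\psi\psi(1)(\nu_\psi)_N$. Your remaining steps also match the paper's argument: linear independence of $\IBr_p(N)$ identifies the constituents of $\theta$, and $G'\le\Ker(\nu_\psi)$ gives $N\cap G'\le\Ker(\theta)$.
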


\begin{proof}
Choose an integer $k\ge\max(5,\theta(1)+1)$ and let $H:=G\wr A_k=G^k\rtimes A_k$, where $A_k$ permutes the $k$ factors naturally. We identify $G$ with $G\times1\times\ldots\times1$ and $N$ with $N\times1\times\ldots\times1$. A subnormal chain from $N$ to $G$ yields $N\sub G\unlhd G^k\unlhd H$, so $N$ is subnormal in $H$.

Let $x\in N$ and $h=(g_1,\ldots,g_k)t\in H$ with $g_1,\ldots,g_k\in G$ and $t\in A_k$. Conjugation by $t$ moves the first component of $G^k$ to position $t(1)$, and conjugation by $(g_1,\ldots,g_k)$ acts componentwise. Hence $hxh^{-1}\in N$ if and only if $t(1)=1$ and $g_1xg_1^{-1}\in N$, and in this case $hxh^{-1}=g_1xg_1^{-1}$. Thus $H$-fusion of elements of $N$ is $G$-fusion, and $\theta$ is $H$-fusion invariant. By hypothesis there exists $\chi\in\Char_p(H)$ with $\chi_N=\theta$.

Write $\chi=\sum_\psi a_\psi\psi$ with $\psi\in\IBr_p(H)$ and integers $a_\psi\ge0$. Every $\psi$ with $a_\psi>0$ satisfies $\psi(1)\le\chi(1)=\theta(1)<k$, so \autoref{lem:wreath}, applied to $G$ in place of $N$, provides a linear $\nu_\psi\in\IBr_p(G)$ with $\psi(g)=\psi(1)\nu_\psi(g)$ for every $p$-regular $g\in G$. Hence
\[\chi_G=\sum_\psi a_\psi\psi(1)\nu_\psi\]
is a sum of linear characters in $\IBr_p(G)$, and $\theta=\chi_N=\sum_\psi a_\psi\psi(1)(\nu_\psi)_N$ is a sum of the linear characters $(\nu_\psi)_N\in\IBr_p(N)$. Since $\IBr_p(N)$ is linearly independent, every irreducible constituent of $\theta$ has the form $(\nu_\psi)_N$ and therefore extends to the linear character $\nu_\psi$ of $G$. As $G'\le\Ker(\nu_\psi)$, we obtain $N\cap G'\le\Ker(\theta)$.
\end{proof}

A linear character $\theta\in\IBr_p(N)$ is $G$-fusion invariant if and only if the \emph{focal subgroup}
\[\foc_G(N):=\langle x^{-1}x^g:x,x^g\in N,\,g\in G\rangle\]
lies in $\Ker(\theta)$. Note that $N'\le\foc_G(N)\le N\cap G'$. 

We will show next that fusion invariance is dispensable.

\begin{Lem}\label{lem:killfusion}
Let $N\sub G$. Then there exists a group $H\rsub N$ such that $\foc_H(N)=N'$ and $N\cap G'\le N\cap H'$.
\end{Lem}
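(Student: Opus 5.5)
The plan is to ``decouple'' $N$ from its $G$-fusion by embedding it twisted-diagonally into $G\times B$. Here $B$ is an auxiliary group whose role is to remember the image of each element of $N$ in $N/N'$, while putting that image inside $B'$. The second requirement is exactly what \autoref{lem:diagcommutator} provides.

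Concretely, I would set $\bar N:=N/N'$ and choose $k\ge1$ divisible by $\exp(\bar N)$ (e.g.\ $k=\exp(\bar N)$). Let $T\le S_k$ be transitive (e.g.\ cyclic), and set $B:=\bar N\wr T$. Let $\delta\colon\bar N\to\bar N^k\le B$ be the diagonal map.
\begin{itemize}
\item Since $\bar N^k$ is abelian and $T$ permutes coordinates, $\delta(\bar N)$ is central in $B$.
\item By \autoref{lem:diagcommutator}, applied to the abelian group $\bar N$, we have $\delta(\bar N)\le B'$.
\end{itemize}
Now put $H:=G\times B$ and embed $N$ via the injective homomorphism $x\mapsto(x,\delta(xN'))$. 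I identify $N$ with its image $D$.

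Subnormality comes first. $D$ is normal in $N\times\delta(\bar N)$: conjugating $(x,\delta(xN'))$ by $(a,b)$ gives $(axa^{-1},\delta(xN'))=(axa^{-1},\delta(axa^{-1}N'))$, using that $\delta(\bar N)$ is central and that $axa^{-1}\equiv x$ modulo $N'$. The remaining links are
\[N\times\delta(\bar N)\unlhd N\times B\sub G\times B,\]
where the first holds because $\delta(\bar N)$ is central in $B$, and the second comes from a subnormal chain $N\sub G$. Hence $D\sub H$.

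Next the focal subgroup. Suppose $h=(g,b)\in H$ and $(x,\delta(xN'))^h\in D$. The conjugate is $(x^g,\delta(xN'))$, again by centrality. Membership in $D$ forces $\delta(x^gN')=\delta(xN')$, i.e.\ $x^{-1}x^g\in N'$. So every generator of $\foc_H(D)$ lies in the copy of $N'$, and together with $N'\le\foc_H(N)$ we get $\foc_H(N)=N'$.

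Finally the commutator condition. We have $H'=G'\times B'\supseteq G'\times\delta(\bar N)$. So if $x\in N\cap G'$, then $(x,\delta(xN'))\in H'$, which gives $N\cap G'\le N\cap H'$ under the identification.

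The only step needing care is the subnormal chain, specifically the normality of the twisted diagonal $D$ in $N\times\delta(\bar N)$. This is exactly where the abelian quotient $N/N'$ and the centrality of $\delta(\bar N)$ are used.
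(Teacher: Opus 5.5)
Your proof is correct and follows essentially the paper's construction: $H$ is the direct product of $G$ with $N/N'\wr C_k$ (you allow any transitive $T$), and $N$ is embedded twisted-diagonally through the central diagonal $\delta(N/N')$, which \autoref{lem:diagcommutator} places inside $H'$. The only cosmetic difference is that you route the subnormal chain through $N\times\delta(\bar N)$, whereas the paper uses the full base group $V\times N$.
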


\begin{proof}
Let $U:=N/N'$, $k:=\exp(U)$ and $\pi\colon N\to U$ be the natural map. Let $V:=U^k$ and $W:=V\rtimes\langle t\rangle\cong U\wr C_k$. Set $H:=W\times G$ and
\[f\colon N\to H,\qquad x\mapsto\bigl(\delta(\pi(x)),x\bigr).\]
Then $f$ is an injective homomorphism, and $\pi(yxy^{-1})=\pi(x)$ gives
\[(v,y)f(x)(v,y)^{-1}=\bigl(\delta(\pi(x)),yxy^{-1}\bigr)=f(yxy^{-1})\]
for $x,y\in N$ and $v\in V$. Hence $f(N)\unlhd V\times N$, and a subnormal chain from $N$ to $G$ yields $V\times N\sub V\times G\unlhd H$, so that $f(N)$ is subnormal in $H$. We identify $N$ with $f(N)$.

Since $V$ is abelian and $t$ fixes $\delta(U)$ pointwise, $\delta(U)$ lies in the center of $W$. For $x\in N$ and $h=(w,g)\in H$ we therefore have $hf(x)h^{-1}=\bigl(\delta(\pi(x)),gxg^{-1}\bigr)$, and this element lies in $f(N)$ only if $gxg^{-1}\in N$ and $\pi(gxg^{-1})=\pi(x)$, i.\,e. $x^{-1}gxg^{-1}\in N'$. Thus $\foc_H(N)=N'$. As $\langle t\rangle$ permutes the coordinates of $V$ transitively, \autoref{lem:diagcommutator} yields $\delta(U)\times1\le W'\times1\le H'$, and $1\times G'\le H'$. Hence every $x\in N\cap G'$ satisfies $f(x)=\bigl(\delta(\pi(x)),1\bigr)(1,x)\in H'$.
\end{proof}

Subnormal extendibility turns out to be governed by a single characteristic subgroup of $N$. We write
\[R(N):=\bigl\langle N\cap G':N\sub G\bigr\rangle\le N.\]
Let $N\sub G$ and $\omega\colon N\to G$ be the inclusion map. For $\alpha\in\Aut(N)$, $\omega\circ\alpha^{-1}$ is another inclusion of $N$ as a subnormal subgroup of $G$. Since 
\[(\omega\alpha^{-1})^{-1}(N\cap G')=\alpha(\omega^{-1}(N\cap G'))=\alpha(N\cap G'),\]
$R(N)$ is invariant under $\alpha$. Hence $R(N)$ is characteristic in $N$. Taking $G=N$ gives $N'\le R(N)$, so that $N/R(N)$ is abelian.

The following main theorem is the analog of \autoref{cor:linearcrit}. 

\begin{Thm}\label{thm:submain}
For $\theta\in\Char_p(N)$ the following assertions are equivalent:
\begin{enumerate}[(1)]
\item\label{item:sm1} $\theta$ is subnormally extendible.
\item\label{item:sm2} $\theta$ extends to every group $G\rsub N$. 
\item\label{item:sm3} Every irreducible constituent of $\theta$ is subnormally extendible.
\item\label{item:sm4} $R(N)\le\Ker(\theta)$.
\end{enumerate}
\end{Thm}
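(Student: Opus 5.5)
I would prove the cycle of implications \ref{item:sm1}$\Rightarrow$\ref{item:sm4}$\Rightarrow$\ref{item:sm2}$\Rightarrow$\ref{item:sm1}, and separately \ref{item:sm1}$\Leftrightarrow$\ref{item:sm3}.

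\textbf{\ref{item:sm1}$\Rightarrow$\ref{item:sm4}.} Fix an arbitrary $G\rsub N$. Applying \autoref{lem:killfusion}, I obtain $H\rsub N$ with $\foc_H(N)=N'$ and $N\cap G'\le N\cap H'$. Next I must show that $\theta$ is $H$-fusion invariant, since then \autoref{thm:sublinear} applied to $H$ gives $N\cap G'\le N\cap H'\le\Ker(\theta)$, and taking the join over all $G$ gives $R(N)\le\Ker(\theta)$. For fusion invariance: if $x,x^h\in N$, then $x^{-1}x^h\in\foc_H(N)=N'$, so $x^h\in xN'$. This alone is not enough for an arbitrary class function. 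The fix is to first apply \autoref{thm:sublinear} to $N$ itself (with $G=N$, where fusion invariance is automatic). This shows that $\theta$ is a sum of linear characters, so $N'\le\Ker(\theta)$ and $\theta$ is constant on cosets of $N'$. Therefore $\theta(x^h)=\theta(x)$, and $\theta$ is $H$-fusion invariant. This is the step I expect to need the most care: the order of applying \autoref{thm:sublinear} (first to $N$, then to $H$) is essential.

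\textbf{\ref{item:sm4}$\Rightarrow$\ref{item:sm2}.} Let $G\rsub N$. Since $N\cap G'\le R(N)\le\Ker(\theta)$, every irreducible constituent of $\theta$ is trivial on $R(N)\supseteq N'$ and is therefore linear. Each such constituent $\lambda$ is a linear Brauer character of $N/(N\cap G')\cong NG'/G'\le G/G'$. If $p>0$, it lies in $\Irr(N/N'\pcore^{p'}(N))$ and has $p'$-order. It extends to the abelian group $G/G'$, and for $p>0$ I choose the extension of $p'$-order, which is possible because the $p'$-part of an abelian group splits off. Inflating gives a linear extension to $G$. Summing these extensions with the multiplicities of the constituents extends $\theta$. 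No invariance hypothesis is needed here.

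\textbf{\ref{item:sm2}$\Rightarrow$\ref{item:sm1}.} This is trivial, since \ref{item:sm2} only drops the fusion-invariance hypothesis from the definition of subnormal extendibility.

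\textbf{\ref{item:sm1}$\Leftrightarrow$\ref{item:sm3}.} Every irreducible constituent of $\theta$ has kernel containing $\Ker(\theta)$, so \ref{item:sm4} for $\theta$ implies \ref{item:sm4} for each constituent. By the equivalence \ref{item:sm1}$\Leftrightarrow$\ref{item:sm4}, applied to the constituents, \ref{item:sm1} therefore gives \ref{item:sm3}. Conversely, if every constituent $\theta_i$ is subnormally extendible, then $R(N)\le\Ker(\theta_i)$ for each $i$ by \ref{item:sm1}$\Rightarrow$\ref{item:sm4} applied to $\theta_i$. Hence $R(N)\le\bigcap_i\Ker(\theta_i)=\Ker(\theta)$, which is \ref{item:sm4} for $\theta$, and this gives \ref{item:sm1}.
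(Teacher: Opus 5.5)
Your proposal is correct and follows essentially the same route as the paper. It applies \autoref{thm:sublinear} first to $N$ itself to get $N'\le\Ker(\theta)$, then uses \autoref{lem:killfusion} together with \autoref{thm:sublinear} for $H$ to get (1)$\Rightarrow$(4), extends through the abelian group $G/G'$ for (4)$\Rightarrow$(2), and uses the intersection-of-kernels argument for the constituents. Your extra step of choosing a $p'$-order extension is harmless but unnecessary, since any linear extension restricted to the $p$-regular elements of $G$ is already a linear Brauer character extending $\lambda$.
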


\begin{proof}\hfill
\begin{enumerate}[leftmargin=20pt,align=left,labelwidth=10pt]
\item[\ref{item:sm1}$\Rightarrow$\ref{item:sm4}:] Let $\theta$ be subnormally extendible and let $N\sub G$. Since $\theta$ is invariant under $N$-fusion, \autoref{thm:sublinear} with $G=N$ gives $N'\le\Ker(\theta)$. Let $H$ be as in \autoref{lem:killfusion}. Since $\foc_H(N)=N'\le\Ker(\theta)$, the character $\theta$ is $H$-fusion invariant, and \autoref{thm:sublinear} yields $N\cap G'\le N\cap H'\le\Ker(\theta)$. As $G$ was arbitrary, $R(N)\le\Ker(\theta)$.

\item[\ref{item:sm4}$\Rightarrow$\ref{item:sm2}:] Let $R(N)\le\Ker(\theta)$ and let $N\sub G$. Since $N'\le R(N)\le\Ker(\theta)$, every irreducible constituent $\lambda$ of $\theta$ is linear, and $N\cap G'\le R(N)\le\Ker(\lambda)$. 
The deflation of $\lambda$ to $N/(N\cap G')\cong NG'/G'$ extends to the abelian group $G/G'$.
Hence, $\lambda$ extends to a linear character $\hat\lambda\in\IBr_p(G)$. If $\theta=\sum_\lambda m_\lambda\lambda$, then $\sum_\lambda m_\lambda\hat\lambda$ is a sum of linear characters of $G$ extending $\theta$.

\item[\ref{item:sm2}$\Rightarrow$\ref{item:sm1}:] This is clear.

\item[\ref{item:sm3}$\Leftrightarrow$\ref{item:sm4}:] Since $\Ker(\theta)$ is the intersection of the kernels of the irreducible constituents of $\theta$, condition \ref{item:sm4} holds for $\theta$ if and only if it holds for every irreducible constituent of $\theta$. Now apply the equivalence \ref{item:sm1}$\Leftrightarrow$\ref{item:sm4} to the constituents.\qedhere
\end{enumerate}
\end{proof}

In contrast to \autoref{ex:coprimecounter}, the subnormal extendibility does not depend on the prime by \autoref{thm:submain}.

It is natural to ask when $\theta$ extends to every group $G$ containing $N$ as an arbitrary subgroup in which $\theta$ is fusion invariant.

\begin{Prop}\label{prop:arbitrary}
Suppose that $\theta$ extends to every group $G$ which contains $N$ as a subgroup and in which $\theta$ is $G$-fusion invariant. Then $\theta=\theta(1)1_N$.
\end{Prop}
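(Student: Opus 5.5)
The plan is to embed $N$ into a group in which $N$ has no nontrivial fusion but in which every element of $N$ (after replacing $N$ by an isomorphic copy) becomes a commutator. Then apply the linear-character argument: an extension of $\theta$ restricts nicely and forces $\theta$ to be trivial on $N$.

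\textbf{Step 1: the construction.} Take $k:=\exp(N/N')\cdot m$ for suitable $m$, and embed $N$ diagonally into $N\wr H$ with $H\le S_k$ transitive, via $\delta\colon N\to N^k$. By \autoref{lem:diagcommutator}, $\delta(N)\le (N\wr H)'$.

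The fusion of $\delta(N)$ inside $N\wr H$ can be nontrivial, since elements of $N^k$ conjugate diagonal elements componentwise. So I would instead use a larger $k$ together with \autoref{lem:wreath}. Choose $k\ge\max(5,\theta(1)+1)$ divisible by $\exp(N/N')$, and set $G:=N\wr A_k$, containing $\delta(N)\cong N$ as a subgroup. The $G$-fusion on $\delta(N)$ is the following: if $(g_1,\ldots,g_k)t\,\delta(x)\,(\ldots)^{-1}=\delta(y)$, then $g_ixg_i^{-1}=y$ for all $i$. Hence $y$ is $N$-conjugate to $x$, and $\theta$, being a class function, is $G$-fusion invariant.

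\textbf{Step 2: the character argument.} By hypothesis there is $\chi\in\Char_p(G)$ with $\chi_{\delta(N)}=\theta$. Every irreducible constituent $\psi$ of $\chi$ has $\psi(1)\le\theta(1)<k$. \autoref{lem:wreath} then gives $\psi(\delta(x))=\psi(1)$ for all $p$-regular $x\in N$. Summing over constituents, $\theta(x)=\chi(\delta(x))=\chi(1)=\theta(1)$ for every $p$-regular $x$. Hence $\theta=\theta(1)1_N$.

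\textbf{Main obstacle.} The main point is checking that the diagonal subgroup $\delta(N)$ of $N\wr A_k$ inherits exactly $N$-fusion, so that the hypothesis is applicable. This holds because conjugation preserves the diagonal shape only up to componentwise conjugation, and each component conjugation lies in $N$. Everything else is a direct reuse of \autoref{lem:wreath}, exactly as in the proof of \autoref{thm:main}.
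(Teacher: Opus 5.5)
Your proof is correct and follows the paper's argument essentially verbatim. You embed $N$ diagonally in $N\wr A_k$ with $k\ge\max(5,\theta(1)+1)$ divisible by $\exp(N/N')$, check that $\delta(N)$ inherits only $N$-fusion, and apply \autoref{lem:wreath} to the constituents of the extension to get $\theta(x)=\chi(\delta(x))=\chi(1)=\theta(1)$. Your aside about $N\wr H$ is off-target: there too $\delta(N)$ carries only $N$-fusion, since the top group centralizes $\delta(N)$, so the switch to $A_k$ with $k>\theta(1)$ is needed only to make \autoref{lem:wreath} applicable.
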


\begin{proof}
Choose an integer $k\ge\max(5,\theta(1)+1)$ divisible by $\exp(N/N')$ and let $W:=N\wr T=N^k\rtimes T$, where $T:=A_k$ permutes the $k$ factors naturally. We identify $N$ with $\delta(N)\le W$.

Let $x,y\in N$ and $w=(y_1,\ldots,y_k)t\in W$ with $y_1,\ldots,y_k\in N$ and $t\in T$ such that $\delta(x)^w=\delta(y)$. The components of $\delta(x)^w$ are $x^{y_1},\ldots,x^{y_k}$ in some order, so $y$ is conjugate to $x$ in $N$ and $\theta(y)=\theta(x)$. Hence, every character in $\Char_p(N)$ is $W$-fusion invariant, and by hypothesis there is some $\chi\in\Char_p(W)$ with $\chi_N=\theta$.

Every irreducible constituent $\psi$ of $\chi$ satisfies $\psi(1)\le\chi(1)=\theta(1)<k$, so that $\psi(\delta(x))=\psi(1)$ for $p$-regular $x\in N$ by \autoref{lem:wreath}. Summing over the irreducible constituents of $\chi$ yields $\theta(x)=\chi(\delta(x))=\chi(1)=\theta(1)$ for every $p$-regular $x\in N$.
\end{proof}

In the next section we will show that non-trivial subnormally extendible characters actually exist.

\section{Computing \texorpdfstring{$R(N)$}{R(N)}}\label{sec:RN}

For a given $N$, it is not obvious how to compute $R(N)$. In this section we derive a lower bound and compute $R(N)$ in special situations. Let $1=\Z_0(N)\le\Z_1(N)\le\ldots$ be the upper central series of $N$ and let $\Z_\infty(N)$ be its last term, the hypercenter of $N$. 

\begin{Lem}\label{lem:diagsubnormal}
For $k\ge1$ and $i\ge0$ we have $\delta(N)\Z_i(N)^k\unlhd \delta(N)\Z_{i+1}(N)^k$. In particular, $\delta(N)$ is subnormal in
$\delta(N)\Z_\infty(N)^k$.
\end{Lem}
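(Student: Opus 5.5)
The plan is a direct commutator computation inside $N^k$. Set $X_i:=\Z_i(N)^k$, which is a normal subgroup of $N^k$ since each $\Z_i(N)$ is characteristic in $N$. Consequently $\delta(N)X_i$ is a subgroup, and the same holds with $i+1$ in place of $i$.

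First I check that $\delta(N)X_i$ is normalized by $\delta(N)$. This holds because $\delta(N)$ normalizes $X_i$. So the only remaining thing to verify is that $X_{i+1}$ normalizes $\delta(N)X_i$.

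Take $z=(z_1,\ldots,z_k)\in X_{i+1}$ and $x\in N$. I will compute
\[z\delta(x)z^{-1}=\delta(x)\bigl(x^{-1}z_1xz_1^{-1},\ldots,x^{-1}z_kxz_k^{-1}\bigr).\]
Since $z_j\in\Z_{i+1}(N)$, each commutator $x^{-1}z_jxz_j^{-1}=[x^{-1},z_j]$ lies in $[N,\Z_{i+1}(N)]\le\Z_i(N)$. This is the defining property of the upper central series: $\Z_{i+1}(N)/\Z_i(N)=\Z(N/\Z_i(N))$. Hence $z\delta(x)z^{-1}\in\delta(N)X_i$.

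Moreover, $X_{i+1}$ normalizes $X_i$. It follows that $X_{i+1}$, and hence all of $\delta(N)X_{i+1}$, normalizes $\delta(N)X_i$. This proves $\delta(N)\Z_i(N)^k\unlhd\delta(N)\Z_{i+1}(N)^k$.

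For the second claim, the upper central series stabilizes at $\Z_n(N)=\Z_\infty(N)$ for some $n$. Chaining the normal inclusions for $i=0,\ldots,n-1$ gives
\[\delta(N)=\delta(N)\Z_0(N)^k\unlhd\delta(N)\Z_1(N)^k\unlhd\ldots\unlhd\delta(N)\Z_\infty(N)^k.\]
So $\delta(N)$ is subnormal in $\delta(N)\Z_\infty(N)^k$.

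There is no real obstacle here. The only point needing care is the commutator bookkeeping in the displayed identity, that is, checking that the correction term has the right form with the chosen conjugation conventions.
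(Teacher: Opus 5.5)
Your proof is correct: the displayed identity checks out componentwise, and the correction terms $x^{-1}z_jxz_j^{-1}$ lie in $\Z_i(N)$ because $\Z_{i+1}(N)/\Z_i(N)$ is central in $N/\Z_i(N)$. This is essentially the paper's argument, which packages your elementwise computation differently: it passes to $\overline N=N/\Z_i(N)$, notes that $\Z(\overline N)^k$ centralizes $\delta(\overline N)$, and takes preimages under $N^k\to\overline N^k$.
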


\begin{proof}
Let $\overline N:=N/\Z_i(N)$ and let $\pi\colon N^k\to\overline N^k$ be induced by the natural map $N\to\overline N$. Since $\Z_{i+1}(N)/\Z_i(N)=\Z(\overline N)$, we have $\delta(N)\Z_i(N)^k=\pi^{-1}(\delta(\overline N))$ and $\delta(N)\Z_{i+1}(N)^k=\pi^{-1}(\delta(\overline N)\Z(\overline N)^k)$. As $\Z(\overline N)^k$ centralizes $\delta(\overline N)$, the claim follows by taking preimages.
\end{proof}

\begin{Thm}\label{thm:subnecessary}
We have $\Z_\infty(N)\kappa^{-1}(\Aut(N)')\le R(N)$. 
\end{Thm}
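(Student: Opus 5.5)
Since $R(N)$ is a subgroup (generated by the groups $N\cap G'$), it suffices to show two inclusions separately: $\kappa^{-1}(\Aut(N)')\le R(N)$ and $\Z_\infty(N)\le R(N)$. In each case we exhibit a group $G$ with $N\sub G$ for which $N\cap G'$ contains the desired subgroup.

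For the first inclusion, repeat the construction of \autoref{lem:NcapH} with $A$ replaced by the full automorphism group $\Aut(N)$. Take $T=A_k$ with $k\ge5$ divisible by $\exp(N/N')$, and set $G:=(N\wr T)\rtimes\Aut(N)$. This construction never used $\theta$, so $\Aut(N)$ acts coordinatewise in the same way. The embedding $\mu(x)=\delta(x^{-1})\kappa_x$ makes $N$ a normal subgroup of $G$, and the argument of \autoref{lem:NcapH} gives $N\cap G'=\kappa^{-1}(\Aut(N)')$. Since $N\unlhd G$ is subnormal, $\kappa^{-1}(\Aut(N)')\le R(N)$.

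For the hypercenter we use \autoref{lem:diagsubnormal}. Choose $k$ divisible by $\exp(N/N')$, let $H\le S_k$ be a transitive group (say $C_k$), and set $W:=N\wr H$. By \autoref{lem:diagsubnormal}, $\delta(N)$ is subnormal in $\delta(N)\Z_\infty(N)^k$. Moreover $\Z_\infty(N)^k$ is normalized by $\delta(N)$ and is characteristic in $N^k$, so $\delta(N)\Z_\infty(N)^k$ is normal in $N^k$ and hence subnormal in $N^k\unlhd W$. Identifying $N$ with $\delta(N)$, we get $N\sub W$. \autoref{lem:diagcommutator} gives $\delta(N)\le W'$, so $N\cap W'=N$, i.e. $R(N)=N$ whenever such a $W$ exists.

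This conclusion is too strong, so the argument must fail somewhere, and it does: $\delta(N)\Z_\infty(N)^k$ need not be normal in $N^k$. Conjugating $\delta(x)$ by $e_1(y)$ yields $\delta(x)\,e_1([y,x])$-type elements, which lie in $\delta(N)\Z_\infty(N)^k$ only if $[y,x]\in\Z_\infty(N)$. The correct approach is therefore to keep $\delta(N)$ subnormal only inside $\delta(N)\Z_\infty(N)^k$ and build the ambient group from that subgroup. Set $Z:=\Z_\infty(N)$ and $G:=(\delta(N)Z^k)\rtimes H$, where $H=C_k$ permutes coordinates; $H$ normalizes both $\delta(N)$ and $Z^k$. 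Then $\delta(N)\sub \delta(N)Z^k\unlhd G$. For $z\in Z$ we have $e_1(z)^{-1}e_i(z)\in G'$, so, as in the proof of \autoref{lem:diagcommutator}, $e_1(z)^{-k}\delta(z)\in G'$.

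The main obstacle is removing the factor $e_1(z^k)$. Since $\delta(N)$ only contains diagonal elements, we cannot simply use $e_1(N')$ as in \autoref{lem:diagcommutator}. Two ways around this: choose $k$ so that $z^k=1$ for all $z\in Z$, i.e. $k$ a multiple of $\exp(Z)$, or note that $e_1(z)^k\in Z^k$ has a $k$-th power structure that can be killed. With $\exp(Z)\mid k$ we obtain $\delta(z)\in G'$ for all $z\in Z$, hence $Z\le N\cap G'\le R(N)$. Combining the two inclusions proves the theorem.
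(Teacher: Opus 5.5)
Your final argument is correct and is essentially the paper's proof. The first inclusion is \autoref{lem:NcapH} with $A=\Aut(N)$. For the second you use the same group $G=(\delta(N)\Z_\infty(N)^k)\rtimes C_k$, and the only difference is how the factor $e_1(z)^k$ is removed: you take $\exp(\Z_\infty(N))\mid k$, whereas the paper takes $k=\exp(Z/Z')$ and applies \autoref{lem:diagcommutator} to $Z\wr C_k\le G$, using $e_1(Z')\le G'$. You should delete the retracted detour through $N\wr H$, which, as you correctly observed, would falsely give $R(N)=N$.
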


\begin{proof}
Set $Z:=\Z_\infty(N)$, $k:=\exp(Z/Z')$ and $W:=N\wr C_k=N^k\rtimes\langle t\rangle$, where $t(x_1,\ldots,x_k)t^{-1}=(x_k,x_1,\ldots,x_{k-1})$. Then $L:=\delta(N)Z^k$ is normalized by $t$. Define 
\[G:=L\langle t\rangle\le W.\] 
By \autoref{lem:diagsubnormal}, $\delta(N)$ is subnormal in $L\unlhd G$. Moreover $Z^k\rtimes\langle t\rangle\le G$ and $\langle t\rangle$ permutes the $k$ coordinates of $Z^k$ transitively, so \autoref{lem:diagcommutator} yields $\delta(Z)\le(Z^k\rtimes\langle t\rangle)'\le G'$. Identifying $N$ with $\delta(N)$ we obtain $Z\le N\cap G'\le R(N)$.

Applied to the trivial character of $N$, where $A=\Aut(N)$, \autoref{lem:NcapH} provides a group $G\unrhd N$ with $N\cap G'=\kappa^{-1}(\Aut(N)')$, so that also $\kappa^{-1}(\Aut(N)')\le R(N)$.
\end{proof}

\begin{Cor}\label{cor:nilpotent}
If $N$ is nilpotent, then $R(N)=N$. In particular, only the multiples of the trivial character are subnormally extendible.
\end{Cor}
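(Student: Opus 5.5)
Since $N$ is nilpotent, its upper central series reaches $N$, so $\Z_\infty(N)=N$. By \autoref{thm:subnecessary} we get $N=\Z_\infty(N)\le\Z_\infty(N)\kappa^{-1}(\Aut(N)')\le R(N)\le N$, hence $R(N)=N$. The first assertion thus follows directly from the lower bound of the previous theorem, and no further construction is needed. In fact the proof of \autoref{thm:subnecessary} already shows this without using $\kappa$: $N=\delta(N)$ is subnormal in $G=\delta(N)N^k\langle t\rangle=N\wr C_k$ with $k=\exp(N/N')$, and \autoref{lem:diagcommutator} gives $\delta(N)\le G'$.

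For the second assertion I apply \autoref{thm:submain}: $\theta\in\Char_p(N)$ is subnormally extendible if and only if $R(N)\le\Ker(\theta)$, which here means $N=\Ker(\theta)$. By definition $\Ker(\theta)$ is the intersection of the kernels of the irreducible constituents of $\theta$. So every irreducible constituent $\lambda$ has $\Ker(\lambda)=N$, i.e. $\lambda(x)=\lambda(1)$ for all $p$-regular $x$. Such a $\lambda$ is irreducible with $N$ in its kernel, so it is the inflation of an irreducible character of the trivial group $N/N$, namely $\lambda=1_N$. Hence $\theta=\theta(1)1_N$.

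Conversely, $\theta(1)1_N$ has kernel $N\ge R(N)$ and is therefore subnormally extendible; alternatively, it extends as $\theta(1)1_G$. There is no genuine obstacle: the only point to check is that $\Z_\infty(N)=N$ characterizes nilpotency, and this is standard.
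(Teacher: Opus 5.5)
Your proof is correct and is essentially the paper's argument: $\Z_\infty(N)=N$ together with \autoref{thm:subnecessary} gives $R(N)=N$, and \autoref{thm:submain} then forces $\Ker(\theta)=N$, i.\,e. $\theta=\theta(1)1_N$. Your side remark that $N\wr C_k$ alone already shows $N\le R(N)$ is just the first half of the proof of \autoref{thm:subnecessary} in the case $\Z_\infty(N)=N$.
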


\begin{proof}
Since $\Z_\infty(N)=N$, \autoref{thm:subnecessary} gives $R(N)=N$. Hence, the claim follows from \autoref{thm:submain}.
\end{proof}

\begin{Ex}\label{ex:c5}\hfill
\begin{enumerate}[(i)]
\item Let $V:=\FF_5^3$, $d:=\diag(2,2,4)\in\GL(3,5)$ and
\[N:=V\rtimes\langle d\rangle=\mathtt{SmallGroup}(500,47).\] 
It can be shown that 
\[\Z_\infty(N)\kappa^{-1}(\Aut(N)')=V\langle d^2\rangle<N.\]
Let $D\cong C_4^3$ be the group of diagonal matrices in $\GL(3,5)$, and let $\sigma$ be the permutation matrix of a $3$-cycle. Then $N$ is subnormal in
 \[G:=V\rtimes(D\rtimes\langle\sigma
\rangle),\]
and another computation shows that $N=N\cap G'= R(N)$. Hence, \autoref{thm:subnecessary} can be strict.

\item Although $\Z_\infty(N)$ is nilpotent, the Fitting subgroup $\F(N)$ is not always contained in $R(N)$. For $N=\mathtt{SmallGroup}(768,1083540)$ one can show that $\F(N)=\pcore_2(N)$ has index $3$ while $R(N)=\pcore^2(N)$ has index $2$. We omit the details.
\end{enumerate}
\end{Ex}

Now we prepare to construct a family of examples where $R(N)<N$. We first record how a subnormal subgroup interacts with coprime action on $\pcore_p(G)$.

\begin{Lem}\label{lem:subcoprime}
Let $U$ be a subnormal subgroup of a group $G$, let $p$ be a prime and let $S\le U$ be a $p'$-subgroup. Then $[\pcore_p(G),S]=[U\cap\pcore_p(G),S]$.
\end{Lem}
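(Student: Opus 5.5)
We argue by induction on the length of a subnormal chain $U=U_0\unlhd U_1\unlhd\ldots\unlhd U_n=G$. Write $P:=\pcore_p(G)$. The inclusion $[U\cap P,S]\le[P,S]$ is trivial, so only the reverse inclusion needs proof. The basic tool is the coprime action identity $[P,S,S]=[P,S]$, valid because $S$ is a $p'$-group acting on the $p$-group $P$.

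\textbf{Normal case.} Suppose first that $U\unlhd G$. Then $[P,S]\le[P,U]\le U$, since $U$ is normal. Moreover $[P,S]\le P$, because $P$ is normal. Hence $[P,S]\le U\cap P$. Applying the coprime identity gives
\[[P,S]=[P,S,S]\le[U\cap P,S].\]
This settles the normal case.

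\textbf{Inductive step.} In general, set $V:=U_{n-1}$, so that $U\sub V\unlhd G$ with a shorter chain from $U$ to $V$. The subgroup $V\cap P$ is a normal $p$-subgroup of $V$, so $V\cap P\le\pcore_p(V)$. Conversely, $\pcore_p(V)$ is characteristic in $V\unlhd G$, hence normal in $G$, so $\pcore_p(V)\le P$. Together these give $\pcore_p(V)=V\cap P$. Similarly, $\pcore_p(V)\cap U=U\cap P$. The normal case applied to $V\unlhd G$, with $S\le U\le V$, yields $[P,S]=[V\cap P,S]=[\pcore_p(V),S]$. The induction hypothesis applied to $U\sub V$ gives $[\pcore_p(V),S]=[U\cap\pcore_p(V),S]=[U\cap P,S]$. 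Combining the two equalities proves the claim.

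\textbf{Main obstacle.} The only non-formal ingredient is the identity $[P,S,S]=[P,S]$ for coprime action. This is standard; see Isaacs' Finite Group Theory, Lemma 4.29. The point to check carefully is that $S$ normalizes $P$, so that the commutator subgroups are formed inside $PS$; this holds because $P\unlhd G$.
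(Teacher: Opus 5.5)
Your proof is correct and follows essentially the same argument as the paper. At each step of a subnormal chain, normality pushes $[Q,S]$ into the next term intersected with $P$, and the coprime identity $[Q,S,S]=[Q,S]$ lets you descend. The paper telescopes along the series of successive normal closures $J_{i+1}=U^{J_i}$ starting from $J_0=UP$, with $Q_i=J_i\cap P$, instead of inducting on a given chain and identifying $\pcore_p(V)=V\cap P$; this is only a difference in packaging.
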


\begin{proof}
Set $P:=\pcore_p(G)$, $J_0:=UP$ and $J_{i+1}:=U^{J_i}$ for $i\ge0$. Since $U$ is subnormal in $J_0$, we have $J_n=U$ for some $n\ge0$. Moreover $U\le J_{i+1}\le J_i$, so $Q_i:=J_i\cap P$ satisfies $Q_0=P$ and $Q_n=U\cap P$. From $[Q_i,S]\le[J_i,U]\le U^{J_i}=J_{i+1}$ and $[Q_i,S]\le P$ we obtain $[Q_i,S]\le Q_{i+1}$. 
The coprime action of $S$ on $Q_i$ yields
\[[Q_i,S]=[[Q_i,S],S]\le[Q_{i+1},S]\le[Q_i,S]\]
by \cite[8.2.7]{Kurzweil}.
Hence $[Q_i,S]=[Q_{i+1},S]$ for all $i$, and $[P,S]=[Q_0,S]=[Q_n,S]=[U\cap P,S]$.
\end{proof}

\begin{Thm}\label{thm:subhall}
Suppose that $N'$ is a cyclic Hall subgroup of $N$. Then $R(N)=\C_N(N')$. 
\end{Thm}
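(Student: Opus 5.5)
The statement splits into the two inclusions $\C_N(N')\le R(N)$ and $R(N)\le\C_N(N')$. I first fix the structure of $N$. Since $N'$ is a normal Hall subgroup, the Schur--Zassenhaus theorem gives a complement $H$, so $N=N'\rtimes H$ with $H\cong N/N'$ abelian. Since $N'$ is cyclic, it is abelian, so $N'\le\C_N(N')$ and $\C_N(N')=N'\times\C_H(N')$.

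\emph{Lower bound.} We already know $N'\le R(N)$. Put $Z:=\C_H(N')$. Then $Z$ centralizes $N'$, and $Z$ centralizes $H$ because $H$ is abelian. Hence $Z\le\Z(N)\le\Z_\infty(N)$, and \autoref{thm:subnecessary} gives $Z\le R(N)$. Together these give $\C_N(N')\le R(N)$.

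\emph{Upper bound.} Let $N\sub G$; I have to show that $N\cap G'$ centralizes $N'$. Since $N'$ is cyclic, it is enough to do this for each Sylow subgroup $P_p$ of $N'$ separately. The plan for a fixed prime $p$ dividing $|N'|$ is as follows.
\begin{enumerate}[(a)]
\item $P_p$ is characteristic in $N'$, which is normal in $N$, and $N$ is subnormal in $G$. So $P_p$ is a subnormal $p$-subgroup of $G$, hence $P_p\le\pcore_p(G)$.
\item Since $N'$ is a Hall subgroup of $N$, $P_p$ is a Sylow $p$-subgroup of $N$. Therefore $N\cap\pcore_p(G)=P_p$.
\item Since $H$ is a $p'$-group, \autoref{lem:subcoprime} with $U=N$ and $S=H$ yields
\[[\pcore_p(G),H]=[P_p,H].\]
\item Coprime action gives $P_p=[P_p,H]\times\C_{P_p}(H)$. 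As $P_p$ is cyclic, one factor is trivial.
\item If $[P_p,H]=1$, then $H$ centralizes $P_p$. In this case all of $N=N'H$ centralizes $P_p$, and we are done.
\item If $[P_p,H]=P_p$, then $P_p=[\pcore_p(G),H]$ is normalized by $\pcore_p(G)$ and by $H$. The aim is to show that $P_p$ is normal in a subgroup $M\le G$ whose derived subgroup contains $N\cap G'$, ideally $M=G$. For conjugates I would compare $[\pcore_p(G),H^g]=P_p^g$ with $P_p$. I would first try to apply \autoref{lem:subcoprime} to $U=N^g$, or to the subnormal subgroup $\langle N,N^g\rangle$, and use that $P_p$ is the unique Sylow $p$-subgroup of $N$.
\item Once $P_p\unlhd G$ is established, $G$ acts on $P_p$ through the abelian group $\Aut(P_p)$. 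Hence $G'$, and in particular $N\cap G'$, centralizes $P_p$.
\end{enumerate}

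I expect step (f) to be the main obstacle: showing that $P_p$ is normalized by enough of $G$, using only that $N$ is subnormal in $G$. If $P_p\unlhd G$ cannot be reached directly, the fallback is to work with the normal closure of $H$ in $G$. I would show that it acts on $[\pcore_p(G),H]$ through an abelian group of automorphisms, and that $N\cap G'$ lies in the derived subgroup of a subgroup acting in this way.
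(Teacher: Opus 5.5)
\textbf{There is a genuine gap in steps (f)--(g), so the upper bound $R(N)\le\C_N(N')$ is unproved.} Your lower bound is correct. It is even a little more direct than the paper's, which identifies $\kappa^{-1}(\Aut(N)')$ with $\C_N(N')$; your observation $\C_H(N')\le\Z(N)\le\Z_\infty(N)$ suffices. Steps (a)--(e) are also fine.

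Your primary goal in (f), $P_p\unlhd G$, is false in general. Take $p=3$ and $N=S_3\times 1\unlhd S_3\times S_3\unlhd G:=S_3\wr C_2$, with $H=\langle((1\,2),1)\rangle$. Then $P_3=A_3\times1$ and $\pcore_3(G)=A_3\times A_3$. For the element $g$ swapping the two factors, $[\pcore_3(G),H^g]=P_3^g=1\times A_3\ne P_3$. So no application of \autoref{lem:subcoprime} to $N^g$ or to $\langle N,N^g\rangle$ can show $P_p^g=P_p$. The fallback does not close the gap either. The whole difficulty is to show $N\cap G'\le M'$ for some proper subgroup $M$ acting on $P_p$ through an abelian group. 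You give no mechanism for this, such as a transfer argument.

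The missing idea is that you never need $P_p$ to be normal. It suffices to use the normal subgroup $P:=\pcore_p(G)$ and the determinant of the conjugation action $\rho\colon G\to\GL(V)$ on $V:=P/\Phi(P)$.
\begin{enumerate}[(i)]
\item In case (f) you already have $[P,H]=P_p$, so $[V,H]$ is the image of $P_p$ in $V$.
\item This image is non-trivial, because a $p'$-group centralizing $P/\Phi(P)$ centralizes $P$, which would give $[P,H]=1$.
\item Being a non-trivial elementary abelian quotient of the cyclic group $P_p$, it is the one-dimensional $H$-module $P_p/\Phi(P_p)$.
\item Coprime action gives $V=\C_V(H)\oplus[V,H]$. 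Hence for $h\in H$, $\det\rho(h)$ is the scalar by which $h$ acts on $P_p/\Phi(P_p)$.
\item The kernel of $\Aut(P_p)\to\Aut(P_p/\Phi(P_p))$ is a $p$-group and $h$ is a $p'$-element. So this scalar is $1$ exactly when $h\in\C_H(P_p)$.
\item Since $\det\circ\rho$ maps $G$ into the abelian group $\FF_p^\times$, you get $H\cap G'\le\C_H(P_p)$.
\item Since $N'\le G'$, Dedekind's law gives $N\cap G'=N'(H\cap G')\le\C_N(P_p)$.
\end{enumerate}
This is exactly the paper's argument.
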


\begin{proof}
Set $D:=N'$ and choose a complement $S$ of $D$ in $N$, which exists by the Schur--Zassenhaus theorem. Since $D$ is abelian and $S\cong N/D$ is abelian, the group $N/[D,S]$ is generated by the central subgroup $D/[D,S]$ and the abelian subgroup $S[D,S]/[D,S]$, hence abelian. This gives $D=N'\le[D,S]$, and $D=\C_D(S)\times[D,S]$ by coprime action \cite[8.4.2]{Kurzweil}, so that $\C_D(S)=1$. Dedekind's lemma yields $\C_N(D)=D\C_S(D)$, and if $z=xy\in\Z(N)$ with $x\in D$ and $y\in\C_S(D)$, then $x\in\C_D(S)=1$. Therefore
\begin{equation}\label{eq:centralizer}
\Z(N)=\C_S(D),\qquad\C_N(D)=D\times\Z(N).
\end{equation}

Restriction to the characteristic subgroup $D$ is a homomorphism from $\Aut(N)$ into the abelian group $\Aut(D)$, so $\Aut(N)'$ centralizes $D$ and $\Aut(N)'\cap\Inn(N)\le\kappa(\C_N(D))$. By \eqref{eq:centralizer} we have $\C_N(D)=D\times\Z(N)=\kappa^{-1}(\kappa(D))$, and $\kappa(D)=\Inn(N)'\le\Aut(N)'$. Hence $\kappa^{-1}(\Aut(N)')=\C_N(D)$, and \autoref{thm:subnecessary} gives $\C_N(D)\le R(N)$.

For the reverse inclusion let $G$ be a group containing $N$ as a subnormal subgroup. Since $D=N'\le G'$, Dedekind's lemma gives $N\cap G'=D(S\cap G')$. By \eqref{eq:centralizer} it therefore suffices to show that $S\cap G'\le\C_S(D_p)$ for every prime divisor $p$ of $|D|$, where $D_p\in\Syl_p(D)$.

As $D_p$ is characteristic in $D$ and $D$ is characteristic in $N$, the group $D_p$ is a subnormal $p$-subgroup of $G$ and therefore $D_p\le P:=\pcore_p(G)$. Since $|N:D|$ is coprime to $p$, we have $D_p\in\Syl_p(N)$, and $N\cap P$ is a normal $p$-subgroup of $N$, whence $N\cap P=D_p$. Moreover $D_p=\C_{D_p}(S)\times[D_p,S]$ by \cite[8.4.2]{Kurzweil} and $\C_{D_p}(S)\le\C_D(S)=1$, so $[D_p,S]=D_p$. Thus \autoref{lem:subcoprime} gives
\[[P,S]=[D_p,S]=D_p.\]

Set $V:=P/\Phi(P)$. As $\Phi(P)$ is characteristic in $P\unlhd G$, conjugation induces a representation $\rho\colon G\to\GL(V)$ over $\FF_p$. Here, $[V,S]$ is the image $\overline{D_p}$ of $[P,S]=D_p$ in $V$. Since $S$ acts coprimely on $P$ and $[P,S]\ne1$, \cite[8.2.9]{Kurzweil} shows that $\overline{D_p}\ne1$. As $\overline{D_p}$ is a non-trivial quotient of the cyclic group $D_p$, we have $\overline{D_p}\cong D_p/\Phi(D_p)$ as $\FF_pS$-modules. Now $V=\C_V(S)\oplus[V,S]$ by \cite[8.4.2]{Kurzweil}, so $\det\rho(y)$ is the scalar by which $y\in S$ acts on $D_p/\Phi(D_p)$. The kernel of the natural map $\Aut(D_p)\to\Aut(D_p/\Phi(D_p))$ is a $p$-group by \cite[8.2.9]{Kurzweil} and $p\nmid|S|$, so $\det\rho(y)=1$ if and only if $y\in\C_S(D_p)$. Since $\FF_p^\times$ is abelian, $G'\le\Ker(\det\circ\,\rho)$ and therefore $S\cap G'\le\C_S(D_p)$.
\end{proof}

\begin{Ex}\label{ex:subQ8}
If all Sylow subgroups of $N$ are cyclic, then $N'$ is cyclic of order coprime to $|N:N'|$ by a theorem of Zassenhaus \cite[Satz~IV.2.11]{Huppert}, so \autoref{thm:subhall} applies to every such $N$. For $N\cong D_{2n}$ with $n$ odd, we obtain $R(N)=\C_N(N')\cong C_n$. Hence, the non-trivial linear character of $N$ is subnormally extendible. In particular, $R(S_3)=A_3$. This implies an observation by P.~Hall: $S_3$ is not subnormal in a perfect group (see \cite[Theorem~3]{Dark}). 
\end{Ex}

The invariant $R(N)$ can also be estimated by passing to quotients. Recall that a \emph{Fitting class} is a non-empty class $\MF$ of finite groups, closed under isomorphisms, which contains with every group $G$ all (sub)normal subgroups of $G$, and with any two normal subgroups $N_1,N_2\unlhd G$ in $\MF$ also their product $N_1N_2$. 
Every finite group $G$ then has a largest normal $\MF$-subgroup, the \emph{$\MF$-radical} $G_{\MF}$. For instance, the $\pi$-groups for a set of primes $\pi$, the nilpotent groups and the solvable groups form Fitting classes. The corresponding radicals are $\pcore_\pi(G)$, the Fitting subgroup $\F(G)$ and the solvable radical of $G$.
Since $N_{\MF}$ is characteristic in $N$, an easy induction on the length of a subnormal chain shows that $N_{\MF}\le G_{\MF}$ whenever $N\sub G$. 

\begin{Prop}\label{thm:reduction}
For every Fitting class $\MF$, we have $R(N)N_{\MF}/N_{\MF}\le R(N/N_{\MF})$.
\end{Prop}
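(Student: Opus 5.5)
Write $M:=N_{\MF}$. Since $R(N)$ is generated by the subgroups $N\cap G'$ with $N\sub G$, it suffices to show that for every such $G$ the image of $N\cap G'$ in $N/M$ lies in $R(N/M)$. So I fix $N\sub G$ and look for a group $\overline G$ that contains $N/M$ as a subnormal subgroup and satisfies $(N\cap G')M/M\le (N/M)\cap\overline G'$. The natural candidate is $\overline G:=G/G_{\MF}$. By the remark preceding the proposition we have $M\le G_{\MF}$. I will also use $N\cap G_{\MF}=M$. To see this, note that $N\cap G_{\MF}$ is normal in $N$. Moreover, along a subnormal chain $N=N_1\unlhd N_2\unlhd\cdots\unlhd N_k=G$ the intersections $N_i\cap G_{\MF}$ are successively normal in one another, so $N\cap G_{\MF}$ is subnormal in $G_{\MF}$. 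Since Fitting classes are closed under subnormal subgroups, $N\cap G_{\MF}\in\MF$, and hence $N\cap G_{\MF}\le M$. The reverse inclusion is the remark itself.

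With this in hand, $NG_{\MF}/G_{\MF}\cong N/(N\cap G_{\MF})=N/M$. Let $\iota\colon N/M\to G/G_{\MF}$ be the injective homomorphism $xM\mapsto xG_{\MF}$. The image $NG_{\MF}/G_{\MF}$ is subnormal in $G/G_{\MF}$, because images of subnormal subgroups under quotient maps are subnormal (apply the quotient map to the chain). Using the isomorphism-invariance of $R$, observed when we showed that $R(N)$ is characteristic, we may transport $\iota$ and regard $N/M$ as a subnormal subgroup of $\overline G$. This gives $\iota^{-1}\bigl(\iota(N/M)\cap\overline G'\bigr)\le R(N/M)$.

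It remains to check that, for $x\in N\cap G'$, the element $xM$ lies in $\iota^{-1}(\overline G')$. This holds because $\iota(xM)=xG_{\MF}\in G'G_{\MF}/G_{\MF}=\overline G'$. Hence $(N\cap G')M/M\le R(N/M)$ for every $G$. Taking the join over all $G$ gives $R(N)M/M\le R(N/M)$.

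The only step with real content is the identity $N\cap G_{\MF}=N_{\MF}$. It depends on intersecting a subnormal chain with $G_{\MF}$ and on the closure of Fitting classes under subnormal subgroups. I expect everything else to be routine bookkeeping about images of subnormal subgroups and derived subgroups under quotient maps.
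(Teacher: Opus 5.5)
Your proposal is correct and follows the paper's proof essentially step by step. Both pass to $G/G_{\MF}$, use $N\cap G_{\MF}=N_{\MF}$ to identify $N/N_{\MF}$ with the subnormal subgroup $NG_{\MF}/G_{\MF}$, invoke isomorphism-invariance of $R$, and note that $N\cap G'$ maps into $(G/G_{\MF})'=G'G_{\MF}/G_{\MF}$. Your explicit proof of $N\cap G_{\MF}\le N_{\MF}$ is a welcome addition: you intersect the subnormal chain with $G_{\MF}$ and use closure of $\MF$ under subnormal subgroups, whereas the paper attributes the full equality to the preceding remark, which only gives $N_{\MF}\le G_{\MF}$.
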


\begin{proof}
Let $N\sub G$ and set $P:=G_{\MF}$. By the remark above, $N_{\MF}=N\cap P$. Hence $xN_{\MF}\mapsto xP$ is an isomorphism from $N/N_{\MF}$ onto $NP/P$, and $NP/P$ is subnormal in $G/P$, being the image of $N$ under the natural map.

Since $R$ is compatible with isomorphisms, this isomorphism maps $R(N/N_{\MF})$ onto $R(NP/P)$. For $x\in N\cap G'$ we have $xP\in NP/P\cap(G/P)'\le R(NP/P)$, so that $xN_{\MF}\in R(N/N_{\MF})$. Hence $(N\cap G')N_{\MF}/N_{\MF}\le R(N/N_{\MF})$. Since $G$ was arbitrary, the claim follows.
\end{proof}

\begin{Ex}\label{ex:solvable}\hfill
\begin{enumerate}[(i)]
\item\label{item:s4} For $N=S_4$ we have $\pcore_2(N)=V\cong C_2^2$, $N/V\cong S_3$ and $R(S_3)=A_3$ by \autoref{ex:subQ8}. Since the preimage of $A_3$ in $S_4$ is $A_4$, \autoref{thm:reduction} gives $R(S_4)\le A_4=S_4'\le R(S_4)$. Hence $R(S_4)=A_4$.

\item\label{item:complete} Let $N$ be an indecomposable solvable complete group. By \cite[Corollary~B and p.~436]{HeinekenLennox}, every group $G\rsub N$ satisfies $N\cap G'\le N'\F(N)$. Therefore, $R(N)\le N'\F(N)$. This generalizes \ref{item:s4}.
\end{enumerate}
\end{Ex}

In the examples so far $N$ was solvable. For almost simple groups $N$ the invariant $R(N)$ can be computed inside $\Aut(S)$, where $S$ is the socle of $N$.

\begin{Thm}\label{thm:almostsimple}
Let $N$ be almost simple with socle $S$ and regard $N$ as a subgroup of $\Aut(S)$ via conjugation. Then 
\[R(N)=\bigl\langle N\cap B':N\sub B\le\Aut(S)\bigr\rangle.\]
In particular, $R(N)=S$ if $\Out(S)$ is abelian, and $R(N)=N\cap\Aut(S)'$ if $N\sub\Aut(S)$.
\end{Thm}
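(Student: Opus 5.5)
The plan is to prove the two inclusions separately. The inclusion $\supseteq$ is immediate: every $B$ with $N\sub B\le\Aut(S)$ is one of the groups in the definition of $R(N)$. For $\subseteq$, take an arbitrary $G$ with $N\sub G$ and produce some $B$ with $N\sub B\le\Aut(S)$ and $N\cap G'\le N\cap B'$.

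\textbf{Construction of $B$.} Since $S$ is characteristic in $N$, an induction along a subnormal chain shows that $S$ is subnormal in $G$. Because $S$ is non-abelian simple, $S$ is then a component of $G$, so its normal closure $M:=S^G$ is a central product of $G$-conjugates of $S$. Here $S$ is normalized by $N$; more to the point, $S$ is normal in $B_0:=\N_G(S)$, and $N\le B_0$ since $S\unlhd N$. Set $C:=\C_G(S)$ and let $\gamma\colon B_0\to\Aut(S)$ be the conjugation map, whose kernel is $C\cap B_0$. Then:
\begin{itemize}
\item $N\cap C=\C_N(S)=1$ because $N$ is almost simple, so $\gamma$ restricted to $N$ is the given embedding $N\le\Aut(S)$.
\item $N$ is subnormal in $B_0$, since a subnormal subgroup stays subnormal in any intermediate subgroup. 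Hence $\gamma(N)=N$ is subnormal in $B:=\gamma(B_0)\le\Aut(S)$.
\item $\gamma(B_0')=B'$, so it suffices to show $N\cap G'\le N\cap B_0'C$. For $x$ in this set, $\gamma(x)=x\in B'$, which gives $N\cap G'\le N\cap B'$.
\end{itemize}

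\textbf{Main obstacle: controlling $N\cap G'$ by $B_0'C$.} The difficulty is that $B_0$ may be a proper subgroup of $G$, so $G'$ is not obviously inside $B_0'C$. I would first reduce to $S\unlhd G$. If $S$ is subnormal in $G$ but not normal, shrink $G$: $N$ is subnormal in $N^G$, and we may replace $G$ by the smallest member $J$ of a subnormal chain from $N$ that still contains all conjugates; but the commutator subgroup only shrinks, so that goes the wrong way.

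Instead, use a transfer-like argument on the permutation action of $G$ on the components $\{S^g\}$. The aim is to show that modulo $\C_G(M)$, the group $G$ embeds in $\Aut(S)\wr \mathrm{Sym}$, and that the projection to the stabilizer of $S$ behaves like Lemma~\ref{lem:NcapH}. Concretely, I would work with $\bar G:=G/\C_G(M)\le\Aut(S)\wr S_m$ and define a homomorphism $\bar G\to \Aut(S)/\Aut(S)'$ by multiplying the coordinates modulo $\Aut(S)'$; this map is well-defined because $\Out$-part products are permutation-invariant in an abelian group. This gives $x\in N\cap G'$ implies $x\in\Aut(S)'$, and refining the argument with the stabilizer gives $x\in B'$.

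\textbf{The ``in particular'' parts.}
\begin{itemize}
\item If $\Out(S)$ is abelian, then $\Aut(S)'\le S$, so $B'\le S$ for every such $B$ and hence $R(N)\le S$. Conversely $S=S'\le N'\le R(N)$, giving $R(N)=S$.
\item If $N\sub\Aut(S)$, take $B=\Aut(S)$. Then $R(N)$ contains $N\cap\Aut(S)'$, and every $N\cap B'\le N\cap\Aut(S)'$, giving $R(N)=N\cap\Aut(S)'$.
\end{itemize}
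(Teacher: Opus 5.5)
You pick the right group $B=\gamma(\N_G(S))$ and correctly reduce to showing $N\cap G'\le N\cap B'$, as the paper does. However, the step that carries all the content is only gestured at, and as sketched it does not go through.

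\textbf{Missing input on the other components.} The coordinate-product homomorphism $\Aut(S)\wr S_m\to\Aut(S)/\Aut(S)'$ tells you something about $\gamma(x)$ only if the image of $x\in N$ is $(\gamma(x),1,\ldots,1)$. That is, $x$ must normalize every conjugate $S^g$ and centralize every $S^g\ne S$. You never establish this, and it is exactly where the subnormality of $N$ itself (not just of $S$) must be used. Each $S^g$ is a component of $G$, so either $S^g\le N$ or $[S^g,N]=1$. If $S^g\le N$, then $S^g$ is a component of $N$, which forces $S^g=S$, since distinct components commute and $\C_N(S)=1$. Without this fact, the image of $x$ is some $(\gamma(x),a_2,\ldots,a_m)\sigma$ with uncontrolled $a_i$ and $\sigma$. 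Then $x\in G'$ only yields $\gamma(x)a_2\cdots a_m\in\Aut(S)'$.

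\textbf{Wrong target subgroup.} Even with that fact, landing in $\Aut(S)'$ is not what the theorem asserts. $N$ need not be subnormal in $\Aut(S)$, so $N\cap\Aut(S)'$ need not lie in the right-hand side. Your phrase ``refining the argument with the stabilizer'' therefore stands in for precisely the missing argument.

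The paper supplies it via the transfer $\tau\colon G\to\N_G(S)/\N_G(S)'$. Since $x\in N$ fixes every coset of $\N_G(S)$ in $G$, $\tau(x)$ is the product of the conjugates $g_ixg_i^{-1}$ over a transversal with $g_1=1$. The factors with $i\ge2$ centralize $S$ by the previous point, so $\gamma$ sends $\tau(x)$ to $\gamma(x)B'$. Hence $x\in G'$ gives $\gamma(x)\in B'$. Equivalently, you could map $G$ into $B\wr S_m$ rather than $\Aut(S)\wr S_m$, using $\gamma$ and a transversal; then $\C_G(M)$ plays no role.

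Your unrefined conclusion $\gamma(x)\in\Aut(S)'$ would suffice for the two ``in particular'' statements, but only once the first gap is closed.
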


\begin{proof}
The right-hand side is contained in $R(N)$ by definition. Conversely, let $N\sub G$. We construct a group $B$ with $N\sub B\le\Aut(S)$ and $N\cap G'\le N\cap B'$.

Since $S$ is a subnormal non-abelian simple subgroup of $G$, every conjugate $S^g$ with $g\in G$ is a component of $G$. By \cite[6.5.2]{Kurzweil}, $S^g\le N$ or $[S^g,N]=1$. In the first case $S^g$ and $S$ are components of $N$, so $S^g=S$ or $[S^g,S]=1$ by \cite[6.5.3]{Kurzweil}, and the latter would give $S^g\le\C_N(S)=1$. Hence, $N$ normalizes every conjugate of $S$ and centralizes every conjugate different from $S$.

Let $G_1:=\N_G(S)$ and let $\rho\colon G_1\to\Aut(S)$ be the conjugation map, so that $\rho$ restricts to the given embedding of $N\le G_1$. Set $B:=\rho(G_1)$. As $N$ is subnormal in $G_1$, its image $N$ is subnormal in $B\le\Aut(S)$. Let $1=g_1,g_2,\ldots,g_r$ be a right transversal of $G_1$ in $G$, so that $S^{g_1},\ldots,S^{g_r}$ are the distinct conjugates of $S$. Let $x\in N$. Since $x$ normalizes $S^{g_i}$, we have $g_ixg_i^{-1}\in G_1$, i.\,e. $x$ fixes every coset $G_1g_i$ under right multiplication. Hence the transfer homomorphism $\tau\colon G\to G_1/G_1'$ satisfies
\[\tau(x)=\prod_{i=1}^rg_ixg_i^{-1}G_1'\]
(see \cite[7.1.1]{Kurzweil} and formula (5) there). For $i\ge2$ the element $x$ centralizes $S^{g_i}$, so $g_ixg_i^{-1}$ centralizes $S$ and $\rho(g_ixg_i^{-1})=1$. Since $\rho(G_1')=B'$, the map $\rho$ induces a homomorphism $\overline\rho\colon G_1/G_1'\to B/B'$ with $\overline\rho(\tau(x))=\rho(x)B'$. If $x\in N\cap G'$, then $\tau(x)=1$, because $G_1/G_1'$ is abelian, and consequently $\rho(x)\in B'$. This shows $N\cap G'\le N\cap B'$.

If $\Out(S)$ is abelian, then every $B$ with $N\sub B\le\Aut(S)$ satisfies $S=S'\le B'\le S$, since $B/S$ is abelian. Hence $R(N)=S$ in this case. If $N\sub\Aut(S)$, then $B=\Aut(S)$ is admissible, and $N\cap B'\le N\cap\Aut(S)'$ for every admissible $B$.
\end{proof}

\begin{Cor}\label{cor:symmetric}
For $n\ge3$ we have $R(S_n)=A_n$. In particular, the sign character is subnormally extendible.
\end{Cor}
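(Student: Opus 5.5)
We need $R(S_n)=A_n$ for $n\ge3$; the second assertion then follows from \autoref{thm:submain}.

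The plan is to split into the small cases $n\le4$ and the almost simple cases $n\ge5$. For $n=3$ the claim is \autoref{ex:subQ8}, and for $n=4$ it is \autoref{ex:solvable}\ref{item:s4}.

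For $n\ge5$ with $n\ne6$, the group $N=S_n$ is almost simple with socle $S=A_n$, and $\Aut(A_n)\cong S_n$. Hence $\Out(S)\cong C_2$ is abelian, and \autoref{thm:almostsimple} gives $R(S_n)=A_n$ directly.

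For $n=6$, the socle is $A_6$ and $\Out(A_6)\cong C_2^2$, which is still abelian. So the clause ``$R(N)=S$ if $\Out(S)$ is abelian'' of \autoref{thm:almostsimple} applies verbatim and yields $R(S_6)=A_6$. The only care needed is that $S_6$ is regarded inside $\Aut(A_6)$ via conjugation, which is exactly the setup of that theorem. The argument there only uses that any $B$ with $S\le B\le\Aut(S)$ has $B/S$ abelian, so the exceptional outer automorphism causes no trouble.

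Finally, $\Ker(\operatorname{sgn})=A_n=R(S_n)$, so the sign character satisfies condition \ref{item:sm4} of \autoref{thm:submain} and is therefore subnormally extendible. There is no real obstacle here: the only point to check is the case split, namely that $n=3,4$ are covered by the earlier examples and that $\Out(A_n)$ is abelian for every $n\ge5$, including $n=6$.
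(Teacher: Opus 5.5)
Your proof is correct and follows the paper's argument: the cases $n=3,4$ come from \autoref{ex:subQ8} and \autoref{ex:solvable}\ref{item:s4}, for $n\ge5$ the abelianness of $\Out(A_n)$ lets \autoref{thm:almostsimple} give $R(S_n)=A_n$, and \autoref{thm:submain} then handles the sign character. Your separate check of $n=6$, where $\Out(A_6)\cong C_2^2$, just spells out the paper's remark that $\Out(A_n)$ is abelian of order at most $4$.
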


\begin{proof}
For $n\in\{3,4\}$ this is \autoref{ex:subQ8} and \autoref{ex:solvable}\ref{item:s4}. For $n\ge5$, $S_n$ is almost simple with socle $A_n$. Since $\Out(A_n)$ is abelian (of order at most $4$), $R(S_n)=A_n$ by \autoref{thm:almostsimple}. The second claim follows from \autoref{thm:submain}.
\end{proof}

\section*{Acknowledgment}
Anthropic Claude Opus 5.5 was used for the preparation of this paper. I have verified all statements and proofs, and take full responsibility for the content.


\begin{thebibliography}{1}

\bibitem{Dark}
R.~S. Dark, \textit{On subnormal embedding theorems for groups}, J. London
  Math. Soc. \textbf{43} (1968), 387--390.

\bibitem{GAP4161}
The GAP~Group, \textit{{GAP -- Groups, Algorithms, and Programming, Version
  4.16.1}}; 2026, \url{https://www.gap-system.org}.

\bibitem{HeinekenLennox}
H. Heineken and J.~C. Lennox, \textit{The subnormal embedding of complete
  groups}, J. Algebra \textbf{90} (1984), 435--445.

\bibitem{Huppert}
B. Huppert, \textit{Endliche {G}ruppen. {I}}, Grundlehren der Mathematischen
  Wissenschaften, Vol. 134, Springer-Verlag, Berlin, 1967.

\bibitem{Isaacs}
I.~M. Isaacs, \textit{Character theory of finite groups}, AMS Chelsea
  Publishing, Providence, RI, 2006.

\bibitem{Karpilovsky}
G. Karpilovsky, \textit{The {S}chur multiplier}, London Mathematical Society
  Monographs. New Series, Vol. 2, The Clarendon Press Oxford University Press,
  New York, 1987.

\bibitem{Kurzweil}
H. Kurzweil and B. Stellmacher, \textit{The theory of finite groups},
  Universitext, Springer-Verlag, New York, 2004.

\bibitem{Navarro}
G. Navarro, \textit{Characters and blocks of finite groups}, London
  Mathematical Society Lecture Note Series, Vol. 250, Cambridge University
  Press, Cambridge, 1998.

\end{thebibliography}
\end{document}